\documentclass{lms}

\usepackage{amsmath,amssymb}

\def\ord{\mathop{\mathrm {ord}}\nolimits}

\newcommand{\Z}{\mathbf{Z}} 
\newcommand{\Q}{\mathbf{Q}}
\newcommand{\R}{\mathbf{R}}

\DeclareMathOperator{\inorm}{\mathbf N}

\newtheorem{theorem}{Theorem}
\newtheorem{algorithm}{Algorithm}
\newtheorem{definition}[theorem]{Definition}
\newtheorem{lemma}[theorem]{Lemma}
\newtheorem{proposition}[theorem]{Proposition}
\newtheorem{corollary}[theorem]{Corollary}

\begin{document}

\title{Computing in quotients of rings of integers}

\author{Claus Fieker and Tommy Hofmann}

\classno{11Y40 (primary), 11-04 (secondary)}

\extraline{Part of this work was supported through the DFG priority programm SPP 1489}

\maketitle 

\begin{abstract}
  We develop algorithms to turn quotients of rings of rings of integers into
  effective Euclidean rings by giving polynomial algorithms for
  all fundamental ring operations. In addition, we study normal forms for
  modules over such rings and their behavior under certain quotients.
  We illustrate the power of our
  ideas in a new modular normal form algorithm for modules over
  rings of integers, vastly outperforming classical algorithms.
\end{abstract}

\section{Introduction}
Rings of integers of number fields are fundamental rings in computational
number theory. Similar to algorithms over the integers, a common computational
tool is the transfer to quotient rings. This is for example done
to prevent intermediate coefficient explosion (Hermite form), allow
techniques based on the Chinese remainder theorem (CRT) (utilize the field
structure of suitable quotients) or limit the precision in $p$-adic
computations. For quotients $\mathbf Z/N\mathbf Z$, $N \neq 0$, of $\mathbf Z$, 
the rational integers, this
has a rich history, in particular normal forms for matrices or modules
over quotients have been studied extensively, both in their own right and
as a means to classify matrices and modules over $\mathbf Z$ itself.
An important observation was the fact that $\Z/N\Z$ can be given
the structure of a Euclidean ring thus allowing the use of general
algorithms designed for this class of rings.
In contrast to this situation, in rings of integers of number fields, the only
properties of quotient rings that have been exploited so far are the
fact that residue class rings of prime ideals are fields (CRT based algorithms)
and the obvious fact that quotients rings are finite, thus can be used to avoid 
intermediate coefficient swell (by reducing modulo some ideal every now and then).
However, the algorithms, e.g., the modular pseudo Hermite normal form of Cohen~\cite{Cohen1996}, or
Biasse--Fieker~\cite{Biasse2012} only add the reduction at crucial steps while still basically
maintaining the old, underlying, non-modular algorithm.

In this paper, we revive the fact that quotient rings of rings of integers are in fact
Euclidean rings allowing for efficient operations. As a result, over
such quotient rings, we can immediately use the rich history of algorithms 
for Euclidean rings. In particular, that allows a much
wider class of quotients to be used for non-trivial computations than
just the residue class fields. In fact, a short study will immediately show
that, since deterministic polynomial factorization over finite fields
is very slow, this gives rise to deterministic algorithms for the
computation of say determinants, of much better complexity.

We illustrate our new ideas by giving a new, truly modular, algorithm
for the computation of normal forms over rings of integers. Our algorithm,
by utilizing the Euclidean structure of suitable quotients, does
not need the complicated (and slow) operations of pseudomatrices
and ideals necessary in the classical approach. In fact, for random
matrices over rings of integers, the new algorithm has a much
better expected runtime than the $\mathbf Z$ algorithms on the 
corresponding $\mathbf Z$-module.

Starting with the Euclidean structure of quotient rings, we then study
matrix normal forms under projections before applying everything to
matrix normal forms over rings of integers.

\section{Background}\label{sec:background}

For the rest of the paper we fix an algebraic number field $K$ of degree $d$ with ring of integers $\mathcal O$.
If $\mathfrak m$ is a non-trivial ideal $\mathcal O$, we denote by $\inorm(\mathfrak m)$ the ideal norm of $\mathfrak m$, i.e., $\inorm(\mathfrak m) = \lvert \mathcal O /\mathfrak m \rvert$.
The main goal of this section is the description of the Euclidean structure of $(\mathcal O/\mathfrak m)$, where $\mathfrak m$ is a non-trivial ideal of $\mathcal O$, based on \cite{Fletcher1971}.
The first step consists of defining the Euclidean structure in case $\mathfrak m$ is a prime ideal power $\mathfrak p^l$, exploiting the special properties of the ring $(\mathcal O/\mathfrak p^l)$.
Finally a CRT based procedure is applied to obtain a Euclidean structure on the whole of $(\mathcal O/\mathfrak m)$ for arbitrary $\mathfrak m$.
\par
Recall that a commutative ring $R$ is called \textit{Euclidean} if there exists a function $\varphi \colon R\setminus \{ 0 \} \to \Z_{\geq 0}$ satisfying the following property: For all $a,b \in R, b \neq 0$ there exist $q,r \in R$ such that 
\begin{align}\label{division} a = q b + r \text{ with $\varphi(r) < \varphi(b)$ or $r = 0$.}
\end{align}
In this case $\varphi$ is called a \textit{Euclidean function} and (\ref{division}) is called \textit{Euclidean division}.
Note that this is not \textit{the} definition of Euclidean rings but one that suits our purpose. We refer the interested reader to \cite{Augargun1995} for an overview of possible definitions and relations between them.
\par
Beginning with a prime ideal power $\mathfrak p^l$ of $\mathcal O$, let us recall some facts about $(\mathcal O/\mathfrak p^l)$.
Let $\pi$ be an element of $\mathfrak p \setminus \mathfrak p^2$, the set of $\mathfrak p$-uniformizers.
Then $(\mathcal O/\mathfrak p^l)$ is a special principal ideal ring, i.e., a ring with unique maximal ideal which is nilpotent, and every ideal is of the form $(\overline \pi^k)$ with $0\leq k < l$.
\par
Fixing a set $S$ of coset representatives of $\mathcal O$ modulo $\mathfrak p$ it is well known that every element $\overline a$ of $(\mathcal O/\mathfrak p^l)$ can be uniquely written in the form
\[ \overline a = \sum_{i=v_\mathfrak p(a)}^{l-1} \overline s_i \overline \pi^i \]
with $s_i \in S$. Moreover $\overline a$ is invertible if and only if $s_0$ is a unit modulo $\mathfrak p$.
Using this representation it is easy to compute the cardinality of various objects.
\begin{lemma}\label{lem:idealsize}
  \begin{enumerate}
    \item 
      $\lvert (\mathcal O/\mathfrak p^l)^\times \rvert = \inorm(\mathfrak p)^{l-1}(\inorm(\mathfrak p)-1)$.
    \item
      $\lvert (\overline \pi^k) \rvert = \inorm(\mathfrak p)^{l-k}$ for $0 \leq k < l$.
    \item
      If $\mathfrak a$ is an ideal of $\mathcal O$, then $\overline{\mathfrak a} = (\overline \pi^{\min(v_\mathfrak p(\mathfrak a),l)})$ and $\lvert {\overline{\mathfrak a}} \rvert = \inorm(\mathfrak p)^{l-{\min(v_\mathfrak p(\mathfrak a),l)}}$.
    \item
      The number of generators of $(\overline \pi^k)$ is $\inorm(\mathfrak p)^{l-k-1}(\inorm(\mathfrak p) - 1)$ if $0 \leq k < l$ and $1$ if $k\geq l$.
  \end{enumerate}
\end{lemma}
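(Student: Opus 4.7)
The plan is to exploit the unique $\overline\pi$-adic expansion recalled just above to count each quantity directly; write $q = \inorm(\mathfrak p) = |S|$ throughout. For (1), an expansion $\overline a = \sum_{i=0}^{l-1}\overline s_i \overline\pi^i$ represents a unit iff $s_0$ is a unit modulo $\mathfrak p$, i.e.\ iff $s_0 \in S$ corresponds to a nonzero element of the field $\mathcal O/\mathfrak p$; this leaves $q-1$ choices for $s_0$ and $q$ free choices for each of $s_1,\dots,s_{l-1}$, giving $(q-1)q^{l-1}$. For (2), multiplication by $\overline\pi^k$ shifts indices, so $(\overline\pi^k)$ consists of exactly those expansions supported in degrees $i \geq k$, and uniqueness yields $q^{l-k}$ elements.

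For (3), I would first reduce to the statement $\overline{\mathfrak p^m} = (\overline\pi^m)$ in $\mathcal O/\mathfrak p^l$ for $0 \leq m \leq l$. The reduction uses that, by unique factorization, $\mathfrak a + \mathfrak p^l = \mathfrak p^{\min(v_\mathfrak p(\mathfrak a),l)}$ (the primes distinct from $\mathfrak p$ contribute valuation $0$ because they are coprime to $\mathfrak p^l$), so the images of $\mathfrak a$ and of $\mathfrak p^{\min(v_\mathfrak p(\mathfrak a),l)}$ coincide in $\mathcal O/\mathfrak p^l$. The inclusion $(\overline\pi^m) \subseteq \overline{\mathfrak p^m}$ is immediate since $\pi^m \in \mathfrak p^m$. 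For the reverse, pass to the DVR $\mathcal O_\mathfrak p$, where $\pi$ is a uniformizer: any $x \in \mathfrak p^m$ takes the form $x = \pi^m u$ with $u \in \mathcal O_\mathfrak p$, so clearing the denominator of $u$ yields $bx = \pi^m a$ with $a \in \mathcal O$ and $b \in \mathcal O \setminus \mathfrak p$. Reducing modulo $\mathfrak p^l$ and invoking the unit criterion of (1), $\overline b$ is invertible, hence $\overline x = \overline\pi^m \overline a\, \overline b^{-1} \in (\overline\pi^m)$. The cardinality formula then follows from (2), with the convention that $(\overline\pi^l) = 0$ contributes one element.

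For (4), if $k \geq l$ then $(\overline\pi^k) = 0$ and the only generator is $0$. Assuming $0 \leq k < l$, I write any $\overline x \in (\overline\pi^k)$ as $\overline x = \overline\pi^k \overline u$ using the expansion, so that the lowest-degree coefficient of $\overline u$ is $\overline s_k$. The claim is that $(\overline x) = (\overline\pi^k)$ iff $\overline u$ is a unit of $\mathcal O/\mathfrak p^l$. The ``if'' direction is immediate; for the converse, if $\overline u$ lies in the unique maximal ideal $(\overline\pi)$, then $1 - \overline u \overline z$ lies outside it and is therefore a unit for every $\overline z$, and rewriting $\overline\pi^k = \overline x \overline z = \overline\pi^k \overline u \overline z$ would force $\overline\pi^k = 0$, contradicting $k < l$. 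By (1), $\overline u$ is a unit iff $s_k \notin \mathfrak p$, yielding $(q-1)q^{l-k-1}$ generators. The main technical input is the local detour in part (3), which upgrades the set-theoretic inclusion $(\overline\pi^m) \subseteq \overline{\mathfrak p^m}$ to an equality; the remaining parts are essentially combinatorial consequences of the uniqueness of the $\overline\pi$-expansion together with the local structure of $\mathcal O/\mathfrak p^l$.
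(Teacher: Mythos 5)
Your proof is correct and follows exactly the route the paper intends: the paper states this lemma without proof, remarking only that the counts follow easily from the unique expansion $\overline a=\sum_i\overline s_i\overline\pi^i$ and the unit criterion on $s_0$, which is precisely what you use. Your localization argument in part (3), upgrading $(\overline\pi^m)\subseteq\overline{\mathfrak p^m}$ to an equality, supplies a detail the paper leaves implicit (it simply asserts that every ideal of $\mathcal O/\mathfrak p^l$ is of the form $(\overline\pi^k)$), and it is sound.
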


By \cite[Proposition 7]{Fletcher1971} the function $(\mathcal O/\mathfrak p^l) \setminus \{ \overline 0 \} \rightarrow \Z_{\geq 0}, \, \overline a \mapsto v_\mathfrak p(a)$ 
defines a Euclidean function on $(\mathcal O/\mathfrak p^l)$.
For the sake of completeness we sketch the argument:
The above representation of elements of $(\mathcal O/\mathfrak p^l)$ shows that every element $\overline a$ can be written as $u_a \overline \pi^k$ for some unit $u_a$ and unique integer $k$ (in fact $k = v_\mathfrak p(a)$).
If $\overline a$ and $\overline b$ are elements of $(\mathcal O/\mathfrak p^l)$ with $\overline b \neq \overline 0$, then
\[ \overline a = \begin{cases} \overline 0 \cdot \overline b + \overline a, & \quad \text{if }v_\mathfrak p(a) < v_\mathfrak p(b), \\
    \overline u_a \overline u_b^{-1} \overline \pi^{v_\mathfrak p(a) - v_\mathfrak p(b)} \cdot \overline b + \overline 0, & \quad \text{if }v_\mathfrak p(a) \geq v_\mathfrak p(b).\end{cases} \]
is a Euclidean division.
Composing this Euclidean function with the monotone increasing function $x \mapsto \inorm(\mathfrak p)^x$ yields
\[ \varphi_\mathfrak p \colon (\mathcal O/\mathfrak p^l) \setminus \{ \overline 0 \} \longrightarrow \Z_{\geq 0}, \, \overline a \longmapsto \inorm (\mathfrak p)^{v_\mathfrak p(a)}, \]
also turning $(\mathcal O/\mathfrak p^l)$ into a Euclidean ring.
Moreover we extend the function to the whole of $(\mathcal O/\mathfrak p^l)$ by setting $\varphi_\mathfrak p(\overline 0) = \inorm(\mathfrak p)^{l}$, such that $\varphi_\mathfrak p(\overline a) = \inorm(\mathfrak p)^{\min(v_\mathfrak p(a),l)}$ for all $\overline a \in (\mathcal O / \mathfrak p^l)$.
\par
Now we can put everything together.
For each prime divisor $\mathfrak p$ of $\mathfrak m$ denote by $\varphi_\mathfrak p \colon (\mathcal O / \mathfrak p^{v_\mathfrak p(\mathfrak m)}) \to \Z$ the Euclidean function defined in the previous paragraph and by $\overline a_\mathfrak p \in (\mathcal O /\mathfrak p^{v_\mathfrak p(\mathfrak m)})$ the $\mathfrak p$-component of an element $\overline a \in (\mathcal O/\mathfrak m)$ under the natural isomorphism $(\mathcal O/\mathfrak m) \cong \prod_{\mathfrak p\mid \mathfrak m} (\mathcal O/\mathfrak p^{v_\mathfrak p(\mathfrak m)})$.
\begin{proposition}
  The ring $(\mathcal O/\mathfrak m)$ together with 
  \[ \varphi \colon (\mathcal O/\mathfrak m) \setminus \{ \overline 0 \} \longrightarrow \Z_{\geq 0}, \, \overline a \longmapsto \inorm(a,\mathfrak m) \] 
is a Euclidean ring. 
\end{proposition}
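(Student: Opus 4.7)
My plan is to reduce the general case to the prime-power case via the CRT isomorphism $(\mathcal O/\mathfrak m) \cong \prod_{\mathfrak p \mid \mathfrak m}(\mathcal O/\mathfrak p^{v_\mathfrak p(\mathfrak m)})$. Under this identification, addition, multiplication, ideals, and the function $\varphi$ all decompose componentwise; the last because $\inorm(a,\mathfrak m) = \prod_{\mathfrak p}\inorm(\mathfrak p)^{\min(v_\mathfrak p(a), v_\mathfrak p(\mathfrak m))} = \prod_{\mathfrak p}\varphi_\mathfrak p(\overline a_\mathfrak p)$, with the extended local function $\varphi_\mathfrak p$ defined on all of $\mathcal O/\mathfrak p^{v_\mathfrak p(\mathfrak m)}$ including zero. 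The condition $\overline a = \overline q\overline b + \overline r$ is equivalent to $\overline r\in \overline a + (\overline b)$, and this coset factors as a product of local cosets $\overline a_\mathfrak p + (\overline b_\mathfrak p)$, so each $\overline r_\mathfrak p$ may be chosen independently.

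I would then split into two cases. If $\overline b$ already divides $\overline a$ in $(\mathcal O/\mathfrak m)$, I simply take $\overline r = 0$. Otherwise, there is at least one prime $\mathfrak p_0 \mid \mathfrak m$ with $\overline a_{\mathfrak p_0} \notin (\overline b_{\mathfrak p_0})$; at every such prime the coset $\overline a_\mathfrak p + (\overline b_\mathfrak p)$ contains no zero element and every element of it has $\mathfrak p$-valuation exactly $v_\mathfrak p(a) < v_\mathfrak p(b)$ (or $v_\mathfrak p(a) < v_\mathfrak p(\mathfrak m)$ when $\overline b_\mathfrak p = 0$), yielding $\varphi_\mathfrak p(\overline r_\mathfrak p) < \varphi_\mathfrak p(\overline b_\mathfrak p)$ for any choice in the coset. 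At the remaining primes, where $\overline a_\mathfrak p \in (\overline b_\mathfrak p)$, I would pick $\overline r_\mathfrak p$ to be a generator of the principal ideal $(\overline b_\mathfrak p)$, namely $\overline r_\mathfrak p = \overline\pi_\mathfrak p^{v_\mathfrak p(b)}$ if $\overline b_\mathfrak p \neq 0$, or the forced $\overline r_\mathfrak p = 0$ if $\overline b_\mathfrak p = 0$ (in which case $\overline a_\mathfrak p = 0$ as well); in either subcase $\varphi_\mathfrak p(\overline r_\mathfrak p) = \varphi_\mathfrak p(\overline b_\mathfrak p)$.

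Taking the product over all $\mathfrak p \mid \mathfrak m$ then gives $\varphi(\overline r) = \prod_{\mathfrak p}\varphi_\mathfrak p(\overline r_\mathfrak p) < \prod_{\mathfrak p}\varphi_\mathfrak p(\overline b_\mathfrak p) = \varphi(\overline b)$, the strict inequality coming from the factor at $\mathfrak p_0$. The quotient $\overline q$ is then recovered from $\overline a - \overline r \in (\overline b)$, completing the Euclidean division.

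The delicate point I expect to have to justify carefully is the choice of $\overline r_\mathfrak p$ at primes where $\overline b_\mathfrak p \neq 0$ but $\overline b_\mathfrak p$ already divides $\overline a_\mathfrak p$: the naive componentwise application of the prime-power Euclidean division returns $\overline r_\mathfrak p = 0$, which via the extension $\varphi_\mathfrak p(\overline 0) = \inorm(\mathfrak p)^{v_\mathfrak p(\mathfrak m)}$ inflates the local factor and would easily swamp the single strict inequality gained at $\mathfrak p_0$. Replacing that $0$ by a nonzero generator of $(\overline b_\mathfrak p)$ restores $\varphi_\mathfrak p(\overline r_\mathfrak p) = \varphi_\mathfrak p(\overline b_\mathfrak p)$, and only then does the strict improvement at $\mathfrak p_0$ propagate through the product.
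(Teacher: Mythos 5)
Your proof is correct and follows essentially the same route as the paper, which simply invokes Fletcher's CRT-based argument and observes that it works with the product in place of the sum of the local Euclidean functions; you have written out the details that the paper delegates to that reference. In particular, the adjustment you flag as delicate---replacing the zero local remainder by a nonzero generator of $(\overline b_\mathfrak p)$ at primes where the division is exact, so that $\varphi_\mathfrak p(\overline r_\mathfrak p)=\varphi_\mathfrak p(\overline b_\mathfrak p)$ there---is exactly the point that makes Fletcher's recombination work, and you have handled it correctly.
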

\begin{proof}
  The proof of \cite[Proposition 6]{Fletcher1971} shows that $(\mathcal O/\mathfrak m)$ is a Euclidean ring with Euclidean function $\sum_\mathfrak p \varphi_\mathfrak p(\overline a_\mathfrak p)$. 
  But it is easy to see that the proof remains valid if the sum is replaced by $f((\varphi_\mathfrak p(\overline a_\mathfrak p))_\mathfrak p)$, where $f \colon \prod_{\mathfrak p \mid \mathfrak m}\R  \to \R$ is any monotonic multivariate function.
  The result then follows by choosing $f$ to be the product and noting that $\inorm(a,\mathfrak m) = \varphi(\overline a) = \prod_\mathfrak p \varphi_\mathfrak p(\overline a_\mathfrak p)$.
\end{proof}

We end this section with some remarks on division in $(\mathcal O/\mathfrak m)$.
First note that due to the presence of zero-divisors the division in $(\mathcal O/\mathfrak m)$ is not unique. 
To illustrate the occurring pitfalls we consider an example in $\Z / 30 \Z$.
It is easy to see that $\overline a = \overline 6$ and $\overline b = \overline{10}$ satisfy $(\overline a , \overline b ) = (\overline g)$ with $g = \overline 2$. 
This shows that $\overline g$ is a greatest common divisor of $\overline a$ and $\overline b$.
We now want to divide by $\overline g$: While the equations $\overline g \cdot \overline{18} = \overline a$ and $\overline g \cdot \overline{20} = \overline b$ show that $\overline{18}$ and $\overline{20}$ are valid quotients, they are not coprime in $\Z/30\Z$ as $(\overline{18},\overline{20}) = (\overline 2)$.
This is in total contrast to the situation of integral domains, where dividing by a greatest common divisor produces coprime elements. 
Here we can try to find coprime quotients by choosing different ones.
Now $\overline g \cdot \overline 3 = \overline a$ and $\overline g \cdot 5 = \overline b$ show that $\overline 3$ and $\overline 5$ will also do and they are fortunately coprime in $\Z/30\Z$.
\par
We now prove that this is always possible by choosing the quotients as small as possible with respect to the Euclidean function.

\begin{proposition}\label{prop:div}
  Let $\overline a,\overline b \in (\mathcal O/\mathfrak m)$. Then the following holds:
  \begin{enumerate}
    \item
      The element $\overline b$ divides $\overline a$ if and only if $(a,\mathfrak m)(b,\mathfrak m)^{-1}$ is an integral ideal.
    \item
      An element $\overline c \in (\mathcal O/\mathfrak m)$ satisfies $\overline b \overline c = \overline a$ if and only if $(c,\mathfrak m) \subseteq (a,\mathfrak m)(b,\mathfrak m)^{-1}$.
    \item
      If $\overline c \in (\mathcal O/\mathfrak m)$ satisfies $\overline b \overline c = \overline a$, then $\varphi(\overline a)/\varphi(\overline b)$ divides $\varphi(\overline c)$.
    \item
      Let $\overline c \in (\mathcal O/\mathfrak m)$ such that $\overline b \overline c= \overline a$. Then $\varphi(\overline a) /\varphi(\overline b) = \varphi(\overline c)$ is equivalent to $(\overline c) = \overline{(a,\mathfrak m)(b,\mathfrak m)^{-1}}$.
    \item
      Let $\overline g \in (\mathcal O/\mathfrak m)$ be a greatest common divisor of $\overline a ,\overline b$, i.e., $(\overline g) = (\overline a,\overline b)$. Assume that $\overline e,\overline f$ are elements of $(\mathcal O/\mathfrak m)$ such that $\overline e \overline g = \overline a$, $\overline f \overline g = \overline b$, $\varphi(\overline e) = \varphi(\overline a)/\varphi(\overline g)$ and $\varphi(\overline f) = \varphi(\overline b)/\varphi(\overline g)$. Then $\overline e$ and $\overline f$ are coprime, i.e., $(\overline e,\overline f) = (\mathcal O/\mathfrak m)$.
  \end{enumerate}
\end{proposition}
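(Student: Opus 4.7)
My plan is to prove the assertions in the order given, since (3)--(5) are essentially formal consequences of (1) and (2), with part (2) being the technical heart. For (1), I would simply unwind the definition of divisibility in the quotient: $\overline b \mid \overline a$ holds iff $a \in (b) + \mathfrak m = (b, \mathfrak m)$. Since both $(a, \mathfrak m)$ and $(b, \mathfrak m)$ contain $\mathfrak m$, this is equivalent to the ideal containment $(a, \mathfrak m) \subseteq (b, \mathfrak m)$, which by the Dedekind structure of $\mathcal O$ is the same as $(a, \mathfrak m)(b, \mathfrak m)^{-1}$ being integral.

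For (2), the forward direction is a quick ideal calculation: from $\overline{bc} = \overline a$ we obtain $(bc) + \mathfrak m = (a) + \mathfrak m$, and
\[
(b, \mathfrak m)(c, \mathfrak m) = (bc) + (b)\mathfrak m + (c)\mathfrak m + \mathfrak m^2 \subseteq (bc) + \mathfrak m = (a, \mathfrak m),
\]
so dividing by $(b, \mathfrak m)$ yields $(c, \mathfrak m) \subseteq (a, \mathfrak m)(b, \mathfrak m)^{-1}$. The reverse direction is where I expect the main obstacle. I would reduce to the prime-power case via the CRT isomorphism $(\mathcal O/\mathfrak m) \cong \prod_{\mathfrak p\mid\mathfrak m}(\mathcal O/\mathfrak p^{v_\mathfrak p(\mathfrak m)})$, and then use the uniformizer representation $\overline a_\mathfrak p = u_a\overline\pi^{v_\mathfrak p(a)}$ recalled before Lemma~\ref{lem:idealsize} to translate the local valuation inequality arising from the ideal inclusion into an honest product factorization in each component.

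Parts (3) and (4) should fall out of (2) together with properties of the ideal norm. By (2), the fractional ideal $(a, \mathfrak m)(b, \mathfrak m)^{-1}$ divides $(c, \mathfrak m)$ as ideals of $\mathcal O$; taking norms gives the divisibility $\varphi(\overline a)/\varphi(\overline b) \mid \varphi(\overline c)$ of (3). For (4), equality of norms forces equality of the two comparable ideals, and since both $(c, \mathfrak m)$ and $(a, \mathfrak m)(b, \mathfrak m)^{-1}$ contain $\mathfrak m$, this descends to the claimed equality $(\overline c) = \overline{(a, \mathfrak m)(b, \mathfrak m)^{-1}}$ in $\mathcal O/\mathfrak m$. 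Finally, for (5), I would first lift the gcd hypothesis $(\overline g) = (\overline a, \overline b)$ to the identity $(g, \mathfrak m) = (a, \mathfrak m) + (b, \mathfrak m)$ of ideals of $\mathcal O$ (both sides contain $\mathfrak m$), then apply (4) to each of $\overline{eg} = \overline a$ and $\overline{fg} = \overline b$ to get $(e, \mathfrak m) = (a, \mathfrak m)(g, \mathfrak m)^{-1}$ and $(f, \mathfrak m) = (b, \mathfrak m)(g, \mathfrak m)^{-1}$. Summing and using the gcd identity gives
\[
(e, \mathfrak m) + (f, \mathfrak m) = \bigl((a, \mathfrak m) + (b, \mathfrak m)\bigr)(g, \mathfrak m)^{-1} = (g, \mathfrak m)(g, \mathfrak m)^{-1} = \mathcal O,
\]
which reduces modulo $\mathfrak m$ to the required coprimality $(\overline e, \overline f) = (\mathcal O/\mathfrak m)$.
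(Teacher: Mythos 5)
Your treatment of (1), (3), (4), (5) and the ``only if'' half of (2) is correct, and your global ideal computation for the forward direction of (2) --- $(b,\mathfrak m)(c,\mathfrak m)\subseteq (bc,\mathfrak m)=(a,\mathfrak m)$ followed by multiplication with $(b,\mathfrak m)^{-1}$ --- is arguably cleaner than the paper's argument, which works componentwise through the CRT decomposition and case-splits on the vanishing of $\overline a_{\mathfrak p}$ and $\overline b_{\mathfrak p}$. Your derivations of (3)--(5) from that forward inclusion, multiplicativity of the ideal norm, and the lifted gcd identity $(g,\mathfrak m)=(a,\mathfrak m)+(b,\mathfrak m)$ match the paper's proof in substance.

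The gap is the reverse direction of (2), which you leave as a sketch. That implication is in fact false, so no elaboration of the sketch can succeed: take $\mathcal O=\Z$, $\mathfrak m=(8)$, $a=b=2$ and $c=3$. Then $(c,\mathfrak m)=\Z=(a,\mathfrak m)(b,\mathfrak m)^{-1}$, yet $\overline b\,\overline c=\overline 6\neq\overline 2=\overline a$. The step that fails in your plan is the translation of the local valuation condition into a product factorization: after writing $a_{\mathfrak p}=u_a\pi^{v_{\mathfrak p}(a)}$ and so on in $\mathcal O/\mathfrak p^{l}$, the condition $v_{\mathfrak p}(c)=v_{\mathfrak p}(a)-v_{\mathfrak p}(b)$ only matches the powers of $\pi$; one additionally needs $u_bu_c\equiv u_a$ modulo $\mathfrak p^{\,l-v_{\mathfrak p}(a)}$, and the ideal inclusion carries no information about these unit parts. (Concretely, the set of quotients of $\overline a$ by $\overline b$ is the coset $\overline c_0+\mathrm{Ann}(\overline b)$ for any fixed quotient $\overline c_0$, which is in general a proper subset of $\overline{(a,\mathfrak m)(b,\mathfrak m)^{-1}}$.) You are in good company: the paper's own proof of (ii) asserts the same false local equivalence ``$\overline b_{\mathfrak p}\overline c_{\mathfrak p}=\overline a_{\mathfrak p}$ iff $v_{\mathfrak p}(c)=v_{\mathfrak p}(a)-v_{\mathfrak p}(b)$'' in the case $\overline a_{\mathfrak p}\neq 0$. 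Since only the ``only if'' direction of (2) is used in (3)--(5), the honest repair is to weaken (2) to that implication --- which is exactly the part you have already proved.
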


\begin{proof}
  (i): This follows from the fact the $\overline b \mid \overline a$ is equivalent to $\overline b_\mathfrak p \mid \overline a_\mathfrak p$ for all prime divisors $\mathfrak p$ of $\mathfrak m$.
  \par
  (ii): For each prime divisor $\mathfrak p$ of $\mathfrak m$ we have $\overline b_\mathfrak p \overline c_\mathfrak p = \overline a_\mathfrak p$. If $\overline a_\mathfrak p \neq 0$ (and therefore $\overline b_\mathfrak p\neq 0$) this is equivalent to $v_\mathfrak p(c) = v_\mathfrak p(a) - v_\mathfrak p(b) = v_\mathfrak p((a,\mathfrak m)(b,\mathfrak m)^{-1})$.
  If $\overline a_\mathfrak p = \overline b_\mathfrak p = 0$ then this is equivalent to $v_\mathfrak p(c) \geq 0 = v_\mathfrak p((a,\mathfrak m)(b,\mathfrak m)^{-1})$.
  If $\overline a_\mathfrak p = 0$ and $\overline b_\mathfrak p \neq 0$, then this is equivalent to $v_\mathfrak p(c) \geq v_\mathfrak p(\mathfrak m) - v_\mathfrak p(b) = v_\mathfrak p((a,\mathfrak m)(b,\mathfrak m)^{-1})$.
  Now the claim follows.
  \par
  (iii) and (iv): This follows from (ii).
  \par
  (v): Note that $(g,\mathfrak m) = (a,b,\mathfrak m)$.
  By (ii) the assumption on the Euclidean function implies $(e,\mathfrak m) = (a,\mathfrak m)(a,b,\mathfrak m)^{-1}$ and $(f,\mathfrak m) = (b,\mathfrak m)(a,b,\mathfrak m)^{-1}$.
  From this one deduces that $(e,f,\mathfrak m ) = \mathcal O$, i.e., $(\overline e,\overline f) = (\mathcal O/\mathfrak m)$.
\end{proof}

\section{Basic operations}

In order to describe the complexity of our algorithms we will rely on a modified notion of basic operations introduced by Mulders and Storjohann in \cite{Storjohann1998}. Let $(R,\varphi)$ be a Euclidean ring and $a,b \in R$. Then a \textit{basic operation} is one of the following:
\begin{enumerate}
  \item[(B1)]
    For $\ast \in \{ +, - , \cdot\}$ return $a \ast b$.
  \item[(B2)]
    If $b$ divides $a$ in $R$ return an element $\mathsf{div}(a,b) = c \in R$ such that $bc = a$.
  \item[(B3)]
    If $b \neq 0$ return $\mathsf{eudiv}(a,b) = (q,r) \in R^2$ such that $a = qb+r$ with $r=0$ or $\varphi(r) < \varphi(b)$.
  \item[(B4)]
    Return $\mathsf{xgcd}(a,b) = (g,s,t,u,v) \in R^5$ such that $(g) = (a,b)$, $g = sa + tb$, $ua+vb= 0$ and $sv - ut = 1$, i.e.,
    \[ \begin{pmatrix} g & 0 \end{pmatrix}  = \begin{pmatrix} a & b \end{pmatrix} \begin{pmatrix} s & u \\ t & v \end{pmatrix} \]
    and the transformation matrix is unimodular.
  \item[(B5)]
    Return $\mathsf{Ann}(a) = c$ such that $(c) = \mathrm{Ann}(a) = \{ r \in R \, | \, ra = 0\}$.
\end{enumerate}
Note that in \cite{Storjohann1998} it is shown that in case of $R = \Z/N\Z$ operations (B1) through (B5) can be performed using $O(M(\log(N) \log(\log(N))))$ bit operations, where $M(t)$ is a bound on the number of bit operations required to multiply two $\lceil t \rceil$-bit integers.
\par
We now turn to the case $R = (\mathcal O/\mathfrak m)$, for which there exists an additional basic operation.
\begin{enumerate}
  \item[(B6)] Given an integral ideal $\mathfrak a$ of $\mathcal O$, return an element $\mathsf{gen}(\mathfrak a) = \overline c \in (\mathcal O/\mathfrak m)$ such that $\overline {\mathfrak a} = (\overline c)$ in $(\mathcal O/\mathfrak m)$.
\end{enumerate}
We now want to show how each basic operation (Bi) in $(\mathcal O/\mathfrak m)$, $1 \leq i \leq 6$, can be solved algorithmically using basic operation in $\Z/N\Z$, where $N = \inorm(\mathfrak m)$ is the norm of $\mathfrak m$.
We assume that we are given $\Z$-bases $(\omega_i)_{1\leq i \leq d}$ and $(\nu_i)_{1 \leq i \leq d}$ of $\mathcal O$ and $\mathfrak m$ respectively such that $\nu_i = n_i \omega_i$ with integers $n_i \in \Z_{\geq 1}$, $1 \leq i \leq d$, i.e., the basis matrix of $\mathfrak m$ is diagonal.
Then the map
\[ (\mathcal O/\mathfrak m) \longrightarrow (\Z/n_1\Z) \times \dotsb \times (\Z/n_d \Z), \, \overline{\sum_i a_i \omega_i} \longmapsto (\overline a_1,\dotsc,\overline a_d) \]
is an isomorphism of abelian groups which we use to identify $(\mathcal O/\mathfrak m)$ with $\prod_i \Z/n_i \Z$.
\par
Evaluating the canonical map $\mathcal O \to (\mathcal O/\mathfrak m)$ at an element $\sum_i a_i \omega_i$ consists of $d$ divisions with remainder and the addition of two elements in $(\mathcal O/\mathfrak m)$ consists of $d$ additions in $\Z/n_i \Z$.
As the above map is not multiplicative, multiplication of two elements $\overline a = (\overline a_1,\dotsc,\overline a_d)$, $\overline b = (\overline b_1,\dotsc,\overline b_d) \in (\mathcal O/\mathfrak m)$ is more involved.
More precisely the element $\overline c = (\overline c_1,\dotsc,\overline c_d) \in (\mathcal O/\mathfrak m)$ with $\overline a \overline b = \overline c$ is given by
\[ \overline c_k = \overline{\sum_i \sum_j a_i b_j \Gamma_{i,j}^k}  \in (\Z/n_k \Z), \]
where $(\Gamma_{i,j}^k)_{i,j,k}$ denotes the structure constants of the $\Z$-algebra $\mathcal O$ with respect to the basis $(\omega_i)_{1\leq i \leq d}$.
Thus for each $1 \leq k \leq d$ we need $d^2$ basic operations in $(\Z/n_k \Z)$ to compute $\overline c_k$.
\par
To accomplish (B2), denote by $M_b \in \Z^{d \times d}$ the representation matrix of $\mathcal O \to \mathcal O, x \mapsto bx$ with respect to $(\omega_i)$, where each entry is reduced modulo $N$, and by $M_\mathfrak m$ the diagonal basis matrix of $\mathfrak m$.
Then $\overline a =\overline b \overline c$ for some element $c \in (\mathcal O/\mathfrak m)$ if and only if the equation $(M_b|M_\mathfrak m)X = a$ is solvable.
As this linear system can be solved modulo $N$, we need $O(d^3)$ basic operations in $\Z/N\Z$.
Note that the kernel of this matrix is (the lift) of $\mathrm{Ann}(\overline b)$, the annihilator of $\overline b$ in $(\mathcal O/\mathfrak m)$.
\par
So far we have shown that operations (B1) and (B2) can be performed using $O(d^3)$ basic operations in $\Z/N\Z$ (for the sake of simplicity a basic operation in $\Z/k\Z$ with $1 \leq k \leq N$ is counted as a basic operation in $\Z/N\Z$).
\par
We now turn to the more involved operations (Bi), $3 \leq i \leq 6$, the big difference to (B1) being the non-uniqueness of the operations (again mainly due to the presence of zero-divisors).
Using the Chinese remainder theorem we will see that the defining properties of the operations can be stated purely in terms of valuations at each prime ideal dividing $\mathfrak m$.
Therefore the main task will be the construction of integral elements with prescribed behavior at a finite set of prime ideals.
While there exist deterministic algorithms for these kind of problems, they have the major flaw that they need a costly prime ideal factorization of $\mathfrak m$.
To overcome this difficulty, in this article we will pursue the idea of probabilistic algorithms. 
More precisely our algorithms will be of Las Vegas type with expected polynomial running time, which can be easily turned into Monte Carlo algorithms if wished.
The running time of our algorithms will depend on the value
\[ p_\mathfrak m = \frac{\lvert (\mathcal O /\mathfrak m)^\times\rvert }{\lvert (\mathcal O/\mathfrak m)\rvert} = \prod_{\mathfrak p \mid \mathfrak m} \left( 1 - \frac{1}{\inorm(\mathfrak p)} \right). \]
In Section~\ref{sec:splitting} we will discuss the size of $p_\mathfrak m$ and the applicability of the presented algorithms.
\par
We assume that we have access to an oracle producing random elements in any finite ring of the form $\Z/ k\Z$, $k \in \Z_{> 0}$.
During the complexity analysis we will omit the costs of calling this oracle.

\subsection{Euclidean function and division with remainder}

\begin{lemma}
  Let $\overline a \in (\mathcal O/\mathfrak m)$. Computing $\varphi(\overline a)$ can be done using $O(d^3)$ basic operations in $\Z/N\Z$.
\end{lemma}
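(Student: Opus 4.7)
The plan is to reduce the computation of $\varphi(\overline a)$ to a single cokernel-order computation over the Euclidean ring $\Z/N\Z$. By definition,
\[
\varphi(\overline a) \;=\; \inorm((a,\mathfrak m)) \;=\; [\mathcal O : (a) + \mathfrak m],
\]
so it suffices to determine the order of the finite abelian group $\mathcal O/((a)+\mathfrak m)$.

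As a $\Z$-submodule of $\mathcal O$, the ideal $(a)+\mathfrak m$ is the column span of the $d\times 2d$ integer matrix $M = (M_a \mid M_\mathfrak m)$, where $M_a\in\Z^{d\times d}$ is the representation matrix of multiplication by $a$ on $\mathcal O$ with respect to $(\omega_i)$ and $M_\mathfrak m = \mathrm{diag}(n_1,\dotsc,n_d)$ is the basis matrix of $\mathfrak m$. Since $N\in\mathfrak m\subseteq(a)+\mathfrak m$, the cokernel is annihilated by $N$, and hence $\mathcal O/((a)+\mathfrak m)$ coincides with the cokernel of the reduced map $\overline M\colon (\Z/N\Z)^{2d}\to(\Z/N\Z)^d$. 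I would therefore compute a Howell (or Hermite) normal form of $\overline M$ over $\Z/N\Z$ and return the product of its $d$ diagonal entries as $\varphi(\overline a)$.

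For the complexity, assembling $M_a$ from the structure constants $\Gamma_{i,j}^k$ costs $O(d^3)$ basic operations in $\Z/N\Z$, since each of its $d^2$ entries is a sum of $d$ products in $\Z/N\Z$; the matrix $M_\mathfrak m$ is given for free. Reducing the $d\times 2d$ matrix $\overline M$ to Howell normal form by the usual column-elimination scheme built on $\mathsf{xgcd}$, $\mathsf{div}$, and $\mathsf{eudiv}$, as analyzed in \cite{Storjohann1998}, requires $O(d^3)$ basic operations, and the final product of $d$ diagonal entries adds only $O(d)$ more. The only point that really has to be checked is that it is safe to work modulo $N$ throughout; this is precisely the content of the observation $N\in\mathfrak m\subseteq(a)+\mathfrak m$, the same structural reason that underlies the $O(d^3)$ analysis of operation (B2) above.
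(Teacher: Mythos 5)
Your proposal is correct and follows essentially the same route as the paper: the representation matrix $M_a$ you assemble is exactly the matrix of the products $a\omega_i$ that the paper computes, and both arguments then take a Hermite/Howell form of $(M_a\mid M_{\mathfrak m})$ modulo $N$ (justified by $N\in\mathfrak m\subseteq (a)+\mathfrak m$) and read off the index as the product of the diagonal entries, with the same $O(d^3)+O(d)$ operation count.
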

\begin{proof}
  We first compute the $d$ products $\overline a \overline \omega_i$ for $1 \leq i \leq d$ using $O(d^3)$ basic operations in $\Z/N\Z$.
  Denoting by $\gamma_1,\dotsc,\gamma_d$ the canonical lifts of these elements we know that
  $\gamma_1,\dotsc,\gamma_d,\nu_1,\dotsc,\nu_d$ constitute a $\Z$-generating system of $(a) + \mathfrak m$.
  Computing the Hermite normal form basis of this generating system then can be done using $O(d^3)$ basic operations in $\Z/N \Z$ while the norm computation takes $O(d)$ such operations.
\end{proof}

\begin{algorithm}[(Probabilistic Euclidean division)]\label{alg:div}
  Let $\overline a, \overline b \in (\mathcal O/\mathfrak m)$, $\overline b \neq \overline 0$. The following steps return $\mathsf{eucdiv}(\overline a,\overline b)$.
  \begin{enumerate}
    \item
      Choose $\overline q \in (\mathcal O/\mathfrak m)$ uniformly distributed and compute $\overline r = \overline a - \overline q \overline b$.
    \item
      If $\varphi(\overline r) \geq \varphi(\overline a)$ go to Step~(i).
    \item
      Return $(\overline q, \overline r)$.
  \end{enumerate}
\end{algorithm}

\begin{lemma}
  Let $\overline a,\overline b \in (\mathcal O/\mathfrak m)$ such that $\overline b$ does not divide $\overline a$. For each prime divisor $\mathfrak p$ of $\mathfrak m$ define
  \[  S_\mathfrak p = \begin{cases} (\mathcal O/\mathfrak p^{v_\mathfrak p(\mathfrak m)}),&\text{if $0 < v_\mathfrak p(a) < v_\mathfrak p(b)$},\\
      (\mathcal O/\mathfrak p^{v_\mathfrak p(\mathfrak m)})^\times, &\text{if $v_\mathfrak p(b) < v_\mathfrak p(a)$},\\
      \{ \overline x \in (\mathcal O/\mathfrak p^{v_\mathfrak p(\mathfrak m)} \, \mid \, \inorm( (a+xb) ,\mathfrak p^{v_\mathfrak p(\mathfrak m)})  \leq \inorm(b,\mathfrak p^{v_\mathfrak p(\mathfrak m)} ) \}, &\text{if $v_\mathfrak p(a) = v_\mathfrak p(b)$}.
                  \end{cases} \]
  Then the following holds:
  \begin{enumerate}
    \item
      If $\overline c \in (\mathcal O/\mathfrak m)$ is an element such that $\overline c_\mathfrak p \in S_\mathfrak p$ for all prime divisors $\mathfrak p$ of $\mathfrak m$, then $\varphi(\overline a+\overline b \overline c) < \varphi(\overline b)$.
    \item
      We have $\{ \overline c \in (\mathcal O/\mathfrak m) \, \mid \, \varphi(\overline a+ \overline b\overline c) < \varphi(\overline b) \} \geq \lvert (\mathcal O/\mathfrak m)^\times \rvert$.
    \item
      If $\overline q \in (\mathcal O/\mathfrak m)$ is uniformly distributed in $(\mathcal O/\mathfrak m)$, then the probability that $\overline a = \overline q \overline b + (\overline a - \overline q \overline b)$ is a Euclidean division is at least $p_\mathfrak m$.
 \end{enumerate}
\end{lemma}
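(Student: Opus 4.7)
The plan is to exploit the CRT isomorphism $(\mathcal O/\mathfrak m) \cong \prod_{\mathfrak p \mid \mathfrak m} (\mathcal O/\mathfrak p^{v_\mathfrak p(\mathfrak m)})$ together with the multiplicativity $\varphi(\overline x) = \prod_{\mathfrak p \mid \mathfrak m} \varphi_\mathfrak p(\overline x_\mathfrak p)$ established in Section~\ref{sec:background}. Every assertion in the lemma then reduces to a prime-by-prime local analysis followed by a single product bookkeeping step.

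For (i), I would verify, for each $\mathfrak p \mid \mathfrak m$ and each $\overline c_\mathfrak p \in S_\mathfrak p$, the local inequality $\varphi_\mathfrak p(\overline a_\mathfrak p + \overline b_\mathfrak p \overline c_\mathfrak p) \le \varphi_\mathfrak p(\overline b_\mathfrak p)$. When $v_\mathfrak p(a) < v_\mathfrak p(b)$ the ultrametric identity gives $v_\mathfrak p(a + cb) = v_\mathfrak p(a)$ for every $c$; when $v_\mathfrak p(b) < v_\mathfrak p(a)$ any unit $c$ yields $v_\mathfrak p(a + cb) = v_\mathfrak p(b)$; and when $v_\mathfrak p(a) = v_\mathfrak p(b)$ the inequality holds by the very definition of $S_\mathfrak p$. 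To promote the product of these local $\le$'s to a strict inequality, I invoke Proposition~\ref{prop:div}(i): since $\overline b \nmid \overline a$, some $\mathfrak p^*$ satisfies $\min(v_{\mathfrak p^*}(a), v_{\mathfrak p^*}(\mathfrak m)) < \min(v_{\mathfrak p^*}(b), v_{\mathfrak p^*}(\mathfrak m))$, forcing $v_{\mathfrak p^*}(a) < v_{\mathfrak p^*}(\mathfrak m)$ and placing us in the first case at $\mathfrak p^*$ with a strict local inequality, hence a strict global one.

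For (ii), I would lower-bound $\lvert S_\mathfrak p \rvert$ by $\lvert (\mathcal O/\mathfrak p^{v_\mathfrak p(\mathfrak m)})^\times \rvert$ in each case. The first two cases are immediate from Lemma~\ref{lem:idealsize}. The work lies in the third case: using the normal form in $(\mathcal O/\mathfrak p^{v_\mathfrak p(\mathfrak m)})$ to write $\overline a_\mathfrak p = \overline u_a \overline \pi^v$ and $\overline b_\mathfrak p = \overline u_b \overline \pi^v$ with units $\overline u_a, \overline u_b$, the condition defining $S_\mathfrak p$ becomes, when $v < v_\mathfrak p(\mathfrak m)$, that $\overline u_a + \overline x \overline u_b$ is a unit modulo $\mathfrak p$; this excludes exactly the residue class $-\overline u_a \overline u_b^{-1} \pmod{\mathfrak p}$ and leaves $\inorm(\mathfrak p)^{v_\mathfrak p(\mathfrak m)-1}(\inorm(\mathfrak p)-1)$ admissible $\overline x$, while the subcase $v \ge v_\mathfrak p(\mathfrak m)$ is vacuous since then $\overline a_\mathfrak p = \overline b_\mathfrak p = \overline 0$ and $S_\mathfrak p$ is everything. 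CRT lifts the product $\prod_\mathfrak p S_\mathfrak p$ into the set appearing in (ii), yielding the lower bound $\lvert (\mathcal O/\mathfrak m)^\times \rvert$. Part (iii) is then instantaneous: $\overline q \mapsto -\overline q$ preserves the uniform distribution, $\overline b \nmid \overline a$ excludes the $r = 0$ branch of the Euclidean condition, and dividing the count from (ii) by $\lvert (\mathcal O/\mathfrak m) \rvert$ recovers exactly $p_\mathfrak m$.

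The main obstacle is the counting in case 3 of $S_\mathfrak p$: one has to translate the inequality $\inorm((a+xb), \mathfrak p^{v_\mathfrak p(\mathfrak m)}) \le \inorm(b, \mathfrak p^{v_\mathfrak p(\mathfrak m)})$ through the local uniformizer representation and separately handle the subcases $v < v_\mathfrak p(\mathfrak m)$ and $v \ge v_\mathfrak p(\mathfrak m)$, checking in each that at least $\lvert (\mathcal O/\mathfrak p^{v_\mathfrak p(\mathfrak m)})^\times \rvert$ residues survive. The remaining content is routine manipulation of valuations together with the CRT decomposition.
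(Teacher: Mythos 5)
Your proposal is correct and follows essentially the same route as the paper: a prime-by-prime local analysis via the CRT decomposition and the multiplicativity of $\varphi$, with the strict inequality in (i) supplied by a prime witnessing $\overline b \nmid \overline a$, and the count in (ii) reduced to the case $v_\mathfrak p(a)=v_\mathfrak p(b)<v_\mathfrak p(\mathfrak m)$. The only (immaterial) difference is that in that case the paper counts $\lvert S_\mathfrak p\rvert$ by projecting to $\mathcal O/\mathfrak p^{v_\mathfrak p(b)+1}$ and counting solutions of $\overline a=-\overline b\overline x$ there, whereas you use the uniformizer normal form to exclude a single residue class modulo $\mathfrak p$; both yield $\lvert(\mathcal O/\mathfrak p^{v_\mathfrak p(\mathfrak m)})^\times\rvert$.
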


\begin{proof}
  (i): Let $\overline c_\mathfrak p \in S_\mathfrak p$.
  In the second and third case we have $v_\mathfrak p(a+bc) \leq v_\mathfrak p(b)$ while in the first case we have $v_\mathfrak p(a+bc) = v_\mathfrak p(a) < v_\mathfrak p(b)$.
  Since $\overline b$ does not divide $\overline a$ there exists a prime divisor $\mathfrak p$ of $\mathfrak m$ such that $0 < v_\mathfrak p(a) < v_\mathfrak p(b)$ implying that $\inorm((a+bc),\mathfrak p^{v_\mathfrak p(\mathfrak m)}) < \inorm(b , \mathfrak p^{v_\mathfrak p(\mathfrak m)})$.
  Thus we have $\varphi(\overline a + \overline b \overline c) < \varphi(\overline b)$.\par
  (ii): It remains to show $\lvert S_\mathfrak p\rvert \geq \lvert (\mathcal O/\mathfrak p^{v_\mathfrak p(\mathfrak m)}) ^\times \rvert$ in the case $v_\mathfrak p(a) = v_\mathfrak p(b)$.
  If $v_\mathfrak p(b) \geq v_\mathfrak p(\mathfrak m)$, then $S_\mathfrak p = \mathcal O/\mathfrak p^{v_\mathfrak p(\mathfrak m)}$ and we are done.
  Therefore let $v_\mathfrak p(b) < v_\mathfrak p(\mathfrak m)$ and consider the natural map $\pi \colon (\mathcal O/\mathfrak p^{v_\mathfrak p(\mathfrak m)}) \to (\mathcal O/\mathfrak p^{v_\mathfrak p(b) + 1})$.
  The set $\pi(S_\mathfrak p)$ is the complement of the set of solutions $\overline a = -\overline b \overline x$ with $\overline x \in (\mathcal O/\mathfrak p^{v_\mathfrak p(b)+ 1})$.
  As this equation has $\inorm((b),\mathfrak p^{v_\mathfrak p(b)+1}) = \inorm(\mathfrak p^{v_\mathfrak p(b)})$ solutions we have $\lvert \pi(S_\mathfrak p)\rvert = \inorm(\mathfrak p^{v_\mathfrak p(b) + 1}) - \inorm(\mathfrak p^{v_\mathfrak p(b)})$.
  It follows that $\lvert S_\mathfrak p\rvert = \inorm(\mathfrak p)^{v_\mathfrak p(\mathfrak m) - (v_\mathfrak p(b)+1)} \lvert \pi(S_\mathfrak p) \rvert = \lvert (\mathcal O/\mathfrak p^{v_\mathfrak p(\mathfrak m)})^\times \rvert$.
  \par
  (iii): This follows from (ii).
\end{proof}

\begin{proposition}
  Algorithm~\ref{alg:div} is correct and the expected number of basic operations in $\Z/N\Z$ is $O((1/p_\mathfrak m) d^3))$.
\end{proposition}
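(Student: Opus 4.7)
The plan is to decouple correctness from complexity and then combine the preceding lemma with a standard geometric-distribution argument. For correctness, note that upon exit the returned pair $(\overline q, \overline r)$ automatically satisfies the Euclidean division property, so it suffices to argue that the loop terminates almost surely. The degenerate case $\overline b \mid \overline a$ warrants separate attention, since the preceding lemma is stated under $\overline b \nmid \overline a$: I would dispatch this case via a preliminary call to (B2), returning $(\mathsf{div}(\overline a, \overline b), \overline 0)$ at cost $O(d^3)$. In the remaining case $\overline b \nmid \overline a$, part~(iii) of the preceding lemma guarantees that a uniformly chosen $\overline q \in (\mathcal O/\mathfrak m)$ yields a valid Euclidean division with probability at least $p_\mathfrak m > 0$, so the algorithm terminates almost surely.

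For the expected running time I would exploit the fact that the random samples $\overline q$ drawn in different iterations are i.i.d.\ uniform. By part~(iii) of the preceding lemma each trial succeeds with probability at least $p_\mathfrak m$, and hence the number of iterations is stochastically dominated by a geometric random variable with parameter $p_\mathfrak m$, whose expectation is at most $1/p_\mathfrak m$. The per-iteration cost then breaks down as follows: step~(i) performs one multiplication and one subtraction in $(\mathcal O/\mathfrak m)$, costing $O(d^3)$ and $O(d)$ basic operations in $\Z/N\Z$ respectively by the arithmetic recorded in Section~\ref{sec:background}; step~(ii) requires one evaluation of $\varphi$, costing $O(d^3)$ basic operations in $\Z/N\Z$ by the preceding lemma (note that $\varphi(\overline b)$ may be computed once outside the loop and cached). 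A single iteration therefore costs $O(d^3)$ basic operations in $\Z/N\Z$, and multiplying by the expected iteration count yields the claimed bound $O((1/p_\mathfrak m) d^3)$.

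The main obstacle I anticipate is reconciling the $\overline b \nmid \overline a$ precondition of the preceding lemma with the unconditional sampling loop in Algorithm~\ref{alg:div}; once the divisibility branch is handled separately via (B2) in $O(d^3)$ basic operations, the remainder is routine bookkeeping combining the geometric expectation with the per-iteration cost estimate.
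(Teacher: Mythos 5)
Your argument follows the paper's own proof: the paper likewise computes the expected number of iterations as $1/p_\mathfrak m$ via the geometric series and multiplies by the $O(d^3)$ per-iteration cost of one multiplication in $(\mathcal O/\mathfrak m)$ and one evaluation of $\varphi$. Your version is in fact slightly more careful on two points the paper elides: the paper treats the success probability as exactly $p_\mathfrak m$ whereas the preceding lemma only gives a lower bound (so your stochastic-domination phrasing is the right one), and the paper never addresses the case $\overline b \mid \overline a$, which falls outside the hypothesis of that lemma and which you correctly dispatch with a preliminary (B2) call.
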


\begin{proof}
  We need to count the expected number of repetitions of Step~(i).
  It is easy to see that for $i \in \Z_{\geq 1}$, with probability $p_\mathfrak m(1-p_\mathfrak m)^{i-1}$ the number of repetitions of Step~(i) is $i$. 
  Thus the expected number is $p_\mathfrak m \sum_{i=1}^\infty i (1-p_\mathfrak m)^{i-1} = p_\mathfrak m ( 1/ p_\mathfrak m + (1-p_\mathfrak m)/{p_\mathfrak m^2}) =1/p_\mathfrak m$. 
  Now the claim follows as Step~(i) needs $O(d^3)$ basic operations in $\Z/N\Z$.
\end{proof}

\subsection{Finding a generator of an ideal and computing the annihilator}

Let $\mathfrak a$ be an ideal of $\mathcal O$.
It is easy to see that for an element $c \in \mathcal O$ the equation $(\overline c) = \overline{\mathfrak a}$ holds if and only if for all prime divisors $\mathfrak p$ of $\mathfrak m$ we have $v_\mathfrak p(c) = v_\mathfrak p(\mathfrak a,\mathfrak m)$.

\begin{algorithm}\label{alg:prob_gen}
  Let $\mathfrak a$ be an integral of $\mathcal O$.
  The following steps return $\overline c \in (\mathcal O/\mathfrak m)$ such that $(\overline c) = \overline {\mathfrak a}$.
  \begin{enumerate}
    \item
      Compute $(\mathfrak a,\mathfrak m)$.
    \item
      Choose $\overline c \in (\mathfrak a,\mathfrak m)/(N^2)$ uniformly distributed.
    \item
      If $(\mathfrak a,\mathfrak m) \neq (N,c)$ go to Step (ii).
    \item
      Return $\overline c \in (\mathcal O/\mathfrak m)$.
  \end{enumerate}
\end{algorithm}

\begin{lemma}
  Algorithm~\ref{alg:prob_gen} is correct and the expected number of basic operations in $\Z/N\Z$ is $O((1/p_\mathfrak m)d^3)$.
\end{lemma}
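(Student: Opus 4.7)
The plan is to split the argument into correctness, a lower bound on the per-iteration success probability, and the per-iteration cost.

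For correctness, suppose the algorithm returns $\overline c$, so $c\in(\mathfrak a,\mathfrak m)$ and $(N,c)=(\mathfrak a,\mathfrak m)$ in $\mathcal O$. By the valuation criterion recalled immediately before the algorithm, it suffices to verify $v_\mathfrak p(c)=v_\mathfrak p(\mathfrak a,\mathfrak m)$ at every $\mathfrak p\mid\mathfrak m$. Since $N\in\mathfrak m$, we have $v_\mathfrak p(N)\geq v_\mathfrak p(\mathfrak m)\geq v_\mathfrak p(\mathfrak a,\mathfrak m)$, so the identity $v_\mathfrak p(\mathfrak a,\mathfrak m)=\min(v_\mathfrak p(N),v_\mathfrak p(c))$ pins down $v_\mathfrak p(c)=v_\mathfrak p(\mathfrak a,\mathfrak m)$ whenever $v_\mathfrak p(\mathfrak a,\mathfrak m)<v_\mathfrak p(\mathfrak m)$; the case of equality is automatic from $c\in(\mathfrak a,\mathfrak m)$.

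For the success probability, set $\mathfrak b=(\mathfrak a,\mathfrak m)$. As $N\mathcal O\subseteq\mathfrak m\subseteq\mathfrak b$, the surjection $\mathfrak b/N^2\mathcal O\twoheadrightarrow\mathfrak b/N\mathcal O$ has constant-size fibres, so uniform sampling in $\mathfrak b/N^2\mathcal O$ induces uniform sampling in $\mathfrak b/N\mathcal O$; moreover the event $(N,c)=\mathfrak b$ depends only on $c\bmod N\mathcal O$. I would then invoke the CRT isomorphism $\mathcal O/N\mathcal O\cong\prod_{\mathfrak p\mid N}\mathcal O/\mathfrak p^{v_\mathfrak p(N)}$ to factor this event into independent local ones. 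At each $\mathfrak p\mid\mathfrak m$ with $v_\mathfrak p(\mathfrak b)<v_\mathfrak p(\mathfrak m)$, the required local condition $v_\mathfrak p(c)=v_\mathfrak p(\mathfrak b)$ is seen to have probability $1-1/\inorm(\mathfrak p)$ by counting the elements of $\mathfrak p^{v_\mathfrak p(\mathfrak b)}\setminus\mathfrak p^{v_\mathfrak p(\mathfrak b)+1}$ inside the relevant local component; all other primes contribute factors that can be bounded the same way. Taking the product yields a success probability at least $\prod_{\mathfrak p\mid\mathfrak m}(1-1/\inorm(\mathfrak p))=p_\mathfrak m$.

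For the cost, Step~(i) is a single HNF-based ideal sum in $O(d^3)$ basic operations in $\Z/N\Z$. Each iteration of Steps~(ii)--(iii) draws a uniform sample from a finite $\Z$-module (charged to the oracle) and performs an HNF-based ideal equality test $(N,c)=\mathfrak b$, again in $O(d^3)$ operations. Combined with the expected iteration count $1/p_\mathfrak m$, obtained from the same geometric sum appearing in the analysis of Algorithm~\ref{alg:div}, this yields the stated $O((1/p_\mathfrak m)d^3)$ bound. The main obstacle is the middle step: careful CRT bookkeeping is needed to handle the primes $\mathfrak p\mid N$ that do not divide $\mathfrak m$ and to check that their contribution does not drop the bound below $p_\mathfrak m$; the choice of $N^2$ rather than $N$ in Step~(ii) is convenient here because it makes the reduction to uniform sampling in $\mathfrak b/N\mathcal O$ transparent.
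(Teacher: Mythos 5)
Your correctness argument and your cost accounting are fine and match the paper's. The genuine problem is exactly the step you yourself flag as ``the main obstacle'': the lower bound on the per-iteration success probability. As you set it up, the event being tested is the ideal identity $(N,c)=(\mathfrak a,\mathfrak m)$ in $\mathcal O$, and after the CRT factorization this imposes a nontrivial local condition at \emph{every} prime $\mathfrak q$ dividing $N=\inorm(\mathfrak m)$, not just at the primes dividing $\mathfrak m$: for $\mathfrak q\mid N$ with $\mathfrak q\nmid\mathfrak m$ one needs $v_\mathfrak q(c)=0$, which holds with probability about $1-1/\inorm(\mathfrak q)<1$. These extra factors make the product \emph{smaller} than $p_\mathfrak m$, so the sentence ``taking the product yields a success probability at least $p_\mathfrak m$'' has the inequality going the wrong way; it does not follow from the factors you computed. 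The loss can be substantial: if $\mathfrak m=\mathfrak p$ is a single degree-one prime above a totally split rational prime $p$, then $N=p$ and $(N)=\mathfrak p\mathfrak p_2\dotsm\mathfrak p_d$, so the test as literally written succeeds with probability roughly $(1-1/p)^d$ while $p_\mathfrak m=1-1/p$.

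The way out — and this is what the paper's proof does, silently — is to observe that correctness only requires the weaker condition $(c,\mathfrak m)=(\mathfrak a,\mathfrak m)$, equivalently $(N,c)+\mathfrak m=(\mathfrak a,\mathfrak m)$, and this event depends only on the image of $c$ in $\mathcal O/\mathfrak m$. Hence only the primes $\mathfrak p\mid\mathfrak m$ enter, and the paper simply fixes such a $\mathfrak p$, counts the elements of $\mathfrak b/(N^2)$ lying outside $\mathfrak b\mathfrak p/(N^2)$ (the choice of $N^2$ guaranteeing $v_\mathfrak p(N^2)>v_\mathfrak p(\mathfrak b)$ so that this subgroup makes sense), gets the local factor $1-1/\inorm(\mathfrak p)$, and multiplies over $\mathfrak p\mid\mathfrak m$ only. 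To repair your write-up you should either replace the test by $(c,\mathfrak m)=(\mathfrak a,\mathfrak m)$ before analyzing it, or explicitly argue that the primes $\mathfrak q\mid N$, $\mathfrak q\nmid\mathfrak m$ may be discarded from the success event; as it stands the central inequality of your proof is unproved, and for the event you actually analyze it is false.
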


\begin{proof}
  We prove the following: If $\mathfrak a$ is an integral ideal of $\mathcal O$ and $\overline c$ is chosen uniformly in $(\mathfrak a,\mathfrak m)/(N^2)$, then the probability that $(\mathfrak a,\mathfrak m) = (N,c)$ is $p_\mathfrak m$.
  Let $\mathfrak b = (\mathfrak a,\mathfrak m)$ and fix one prime divisor $\mathfrak p$ of $\mathfrak m$.
  We want to count the elements $\overline c \in \mathfrak b /(N^2)$ such that $v_\mathfrak p(c) = v_\mathfrak p(\mathfrak b)$.
  Note that $v_\mathfrak p(N^2) > v_\mathfrak p(\mathfrak b)$ and therefore $c \in \mathfrak b \setminus \mathfrak b \mathfrak p$ is equivalent to $\overline c \in \mathfrak b/(N^2)\setminus \mathfrak b \mathfrak p/(N^2)$. Counting the elements in these sets we see that probability that an element $\overline c \in \mathfrak b /(N^2)$ satisfies $v_\mathfrak p(c) = v_\mathfrak p(\mathfrak a)$ is $(1 - 1/\inorm(\mathfrak p))$.
  \par
  Note that Step~(i) needs $O(d^3)$ basic operations in $\Z/N\Z$.
  We have already shown that the expected number of executions of Step~(iii) is $1/p_\mathfrak m$.
  As each execution consists of $O(d^3)$ basic operations in $\Z/N\Z$, the claim follows.
\end{proof}

\begin{lemma}
  Let $\overline b \in (\mathcal O/\mathfrak m)$. Then we can compute $\overline c = \mathsf{Ann}(\overline b)$ with an expected number of $O((1/p_\mathfrak m)d^3)$ basic operations in $\Z/N\Z$.
\end{lemma}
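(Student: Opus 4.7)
The plan is to reduce the computation of $\mathsf{Ann}(\overline b)$ to a single linear-algebra step over $\Z/N\Z$ followed by one invocation of Algorithm~\ref{alg:prob_gen}, so that the claimed complexity falls out of two results already established in Section~3.

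First I would observe that $\mathrm{Ann}(\overline b)$ is nothing but $\overline{\mathfrak a}$, where $\mathfrak a = (\mathfrak m:(b))$ is the integral ideal quotient of $\mathfrak m$ by the principal ideal $(b)$: an element $x\in\mathcal O$ satisfies $\overline{x}\,\overline{b}=\overline{0}$ in $(\mathcal O/\mathfrak m)$ if and only if $xb\in\mathfrak m$, i.e.\ $x\in\mathfrak a$. Thus it suffices to represent $\mathfrak a$ in a form accepted by Algorithm~\ref{alg:prob_gen} and to then run that algorithm.

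Second, I would recover $\mathfrak a$ via exactly the linear-algebra setup used earlier for (B2). Writing $x=\sum_i x_i\omega_i$ and $m=\sum_i y_i\nu_i$, the condition $bx\in\mathfrak m$ translates to the homogeneous system
\[
(M_b \mid M_\mathfrak m)\binom{\vec x}{\vec y}\equiv 0\pmod{N},
\]
where the reduction modulo $N$ is harmless since $N\mathcal O\subseteq\mathfrak m$. Solving this $d\times 2d$ system modulo $N$ costs $O(d^3)$ basic operations in $\Z/N\Z$. Projecting the kernel onto the first $d$ coordinates yields $\Z$-generators (modulo $N$) of $\mathfrak a$, and hence a presentation of $\overline{\mathfrak a}$ in $(\mathcal O/\mathfrak m)$. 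Note that because $\mathfrak m\subseteq\mathfrak a$, Step (i) of Algorithm~\ref{alg:prob_gen} is then free.

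Third, feeding $\mathfrak a$ into Algorithm~\ref{alg:prob_gen} returns, by the preceding lemma, an element $\overline c\in(\mathcal O/\mathfrak m)$ with $(\overline c)=\overline{\mathfrak a}=\mathrm{Ann}(\overline b)$ in an expected $O((1/p_\mathfrak m)d^3)$ basic operations in $\Z/N\Z$. This second cost dominates the deterministic linear-algebra step, giving the stated bound. The only point requiring real justification is the identification $\mathrm{Ann}(\overline b)=\overline{(\mathfrak m:(b))}$ together with the fact that the $\vec x$-components of the kernel of $(M_b\mid M_\mathfrak m)$ mod $N$ really span $\mathfrak a$ modulo $\mathfrak m$; both facts are immediate from the inclusion $N\mathcal O\subseteq\mathfrak m$, so there is no genuine obstacle.
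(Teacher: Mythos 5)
Your proposal is correct and follows essentially the same route as the paper: the paper likewise computes the annihilator as the kernel of the (B2) linear system $(M_b\mid M_\mathfrak m)$ modulo $N$ in $O(d^3)$ basic operations and then invokes Algorithm~\ref{alg:prob_gen} to extract a generator, with that probabilistic step dominating the cost. Your identification $\mathrm{Ann}(\overline b)=\overline{(\mathfrak m:(b))}$ is just a more explicit phrasing of what the paper leaves implicit.
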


\begin{proof}
  After computing the annihilator as the kernel of $M_b$ modulo $N$ (as for (B2)) using $O(d^3)$ basic operations, we apply Algorithm~\ref{alg:prob_gen} to obtain a generator.
\end{proof}

\subsection{Extended GCD computation}

We now turn to the $\mathsf{xgcd}$ problem.
In case of the rational integers $\Z$ the task is easy:
If $g$ is a greatest common divisor of two integers $a,b \in \Z$ we can compute $s,t \in \Z$ such that $g = s a + t b$. Then
\[ \begin{pmatrix} g & 0 \end{pmatrix} = \begin{pmatrix} a & b \end{pmatrix} \begin{pmatrix} s & -t/g \\ t & s/g \end{pmatrix} \]
and we are done. While we can of course just use the normal Euclidean
algorithm to find the cofactors, this is, in our case, rather expensive as each
Euclidean division requires a random search. On the other hand, computing the
GCD directly using ideals takes only \textit{one} random search.
\par
As the underlying idea is that dividing by a greatest common divisor produces coprime elements, the example at the end of Section~\ref{sec:background} shows that we cannot blindly adapt this in the presence of zero-divisors.
Fortunately Proposition~\ref{prop:div} shows that there exists minimal quotients $\overline e,\overline f$ with respect to the Euclidean function such that $\overline e \overline g =\overline a$, $\overline f \overline g =\overline b$ and $(\overline e,\overline f) = (\mathcal O/\mathfrak m)$.
In particular there exists $\overline u,\overline v \in (\mathcal O/\mathfrak m)$ such that $\overline e \overline u + \overline f \overline v = 1$. 
A quick calculation shows that
\[ \begin{pmatrix} \overline g & \overline 0 \end{pmatrix} = \begin{pmatrix} \overline a & \overline b \end{pmatrix} \begin{pmatrix} \overline u & - \overline f \\ \overline v & \overline e \end{pmatrix} \]
is a unimodular transformation implying that $\mathsf{xgcd}(\overline a,\overline b) = (\overline g, \overline u,\overline v, - \overline f ,\overline e)$ is valid.
\par
In order to apply this we need to explain how to find minimal quotients and how to express a greatest common divisor as a linear combination.

\begin{lemma}
  \begin{enumerate}
    \item
    Let $\overline b$ be a divisor of $\overline a$. An element $\overline c \in (\mathcal O/\mathfrak m)$ with $\overline c \overline b = \overline a$ and $\varphi(\overline c) = \varphi(\overline a)/\varphi(\overline b)$ can be computed using an expected number of $O((1/p_\mathfrak m)d^3)$ basic operations in $\Z/N\Z$.
  \item
    Let $\overline e,\overline f \in (\mathcal O/\mathfrak m)$ be such that $(\overline e,\overline f) = (\mathcal O/\mathfrak m)$. Then $\overline u,\overline v$ with $\overline u \overline e + \overline v \overline f = 1$ can be computed using $O(d^3)$ basic operations in $\Z/N\Z$.
  \item
    Let $\overline a,\overline b \in (\mathcal O/\mathfrak m)$. Then $\mathsf{xgcd}(\overline a,\overline b)$ can be computed with an expected number of $O((1/p_\mathfrak m)d^3)$ basic operations.
\end{enumerate}
\end{lemma}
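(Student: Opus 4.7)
\emph{Plan.} The three parts are handled in order, with (iii) assembled from (i), (ii) and Algorithm~\ref{alg:prob_gen} along the lines of the discussion preceding the statement.

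For (i), Proposition~\ref{prop:div}(i) ensures that $\mathfrak c := (a,\mathfrak m)(b,\mathfrak m)^{-1}$ is an integral ideal, and the inclusions $\mathfrak m(b,\mathfrak m) \subseteq \mathfrak m \subseteq (a,\mathfrak m)$ force $\mathfrak c \supseteq \mathfrak m$. The algorithm first forms $\mathfrak c$ from $(a,\mathfrak m)$ and $(b,\mathfrak m)$ via HNF-based ideal arithmetic modulo $N$, in $O(d^3)$ basic operations in $\Z/N\Z$, and then invokes Algorithm~\ref{alg:prob_gen} on $\mathfrak c$ to produce $\overline c \in (\mathcal O/\mathfrak m)$ with $(\overline c) = \overline{\mathfrak c}$. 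Proposition~\ref{prop:div}(ii) then yields $\overline b \overline c = \overline a$, and (iv) yields $\varphi(\overline c) = \varphi(\overline a)/\varphi(\overline b)$; the total expected cost is that of Algorithm~\ref{alg:prob_gen}, namely $O((1/p_\mathfrak m) d^3)$.

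For (ii), after identifying $(\mathcal O/\mathfrak m)$ with $\prod_i \Z/n_i\Z$, the equation $\overline u \overline e + \overline v \overline f = 1$ unfolds via the multiplication matrices $M_e, M_f$ (and the diagonal matrix $M_\mathfrak m$ of $\mathfrak m$) into a linear system over $\Z/N\Z$ of size $O(d)$, solvable in $O(d^3)$ basic operations. For (iii), we apply Algorithm~\ref{alg:prob_gen} to the ideal $(a,b,\mathfrak m)$ to obtain $\overline g$ with $(\overline g)=(\overline a,\overline b)$; apply (i) twice to extract minimal quotients $\overline e,\overline f$ satisfying $\overline e \overline g=\overline a$ and $\overline f \overline g=\overline b$; appeal to Proposition~\ref{prop:div}(v) to conclude $(\overline e,\overline f)=(\mathcal O/\mathfrak m)$; apply (ii) to produce $\overline u,\overline v$ with $\overline u \overline e+\overline v \overline f=1$; and return $(\overline g,\overline u,\overline v,-\overline f,\overline e)$, whose validity is the matrix identity printed immediately before the statement. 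Each of the four substeps costs an expected $O((1/p_\mathfrak m) d^3)$ basic operations, giving the overall bound.

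The step that will need most care is the ideal arithmetic in part (i): naively inverting $(b,\mathfrak m)$ as a fractional ideal and multiplying could blow up the complexity. The fix is to exploit $\mathfrak c\supseteq\mathfrak m$ so that all data on $\mathfrak c$ can be represented and manipulated via HNF bases reduced modulo $N$; once this is set up, everything else reduces to linear algebra and direct appeals to Proposition~\ref{prop:div}.
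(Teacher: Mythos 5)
Your parts (ii) and (iii) follow the paper's route exactly: (ii) is the same linear system over $\Z/N\Z$ built from the multiplication matrices and the basis matrix of $\mathfrak m$, and (iii) is the same assembly (one random search for $\overline g$ via Algorithm~\ref{alg:prob_gen} applied to $(a,b,\mathfrak m)$, minimal quotients, B\'ezout, and the $2\times 2$ unimodular matrix displayed before the lemma). For (i) you take a genuinely different but mathematically equivalent route. The paper computes a fixed quotient $\overline c_0$ with (B2), notes that the kernel of $(M_b\,|\,M_\mathfrak m)$ gives a basis of $\mathrm{Ann}(\overline b)$ for free, finds a generator of $\mathrm{Ann}(\overline b)$ with (B6), and then samples $\overline q\in\mathrm{Ann}(\overline b)$ until $\overline c_0+\overline q$ is minimal, with success probability read off from Lemma~\ref{lem:idealsize}. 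You instead sample a generator of $\overline{\mathfrak c}$, $\mathfrak c=(a,\mathfrak m)(b,\mathfrak m)^{-1}$, directly via Algorithm~\ref{alg:prob_gen}; since the full solution set of $\overline b\overline x=\overline a$ is exactly $\overline{\mathfrak c}$ by Proposition~\ref{prop:div}(ii) and minimality is equivalent to $(\overline c)=\overline{\mathfrak c}$ by (iv), this is correct and arguably cleaner, as it reuses the already-analyzed success probability of Algorithm~\ref{alg:prob_gen} instead of a separate counting argument.

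The one step you have not actually secured is the cost of producing $\mathfrak c$ in part (i). The paper itself points out (in the complexity remarks after the demodularization algorithm) that multiplying two ideals given by $\Z$-bases costs $O(d^4)$ operations, so ``HNF-based ideal arithmetic modulo $N$'' does not deliver $(a,\mathfrak m)(b,\mathfrak m)^{-1}$ in $O(d^3)$; reducing entries modulo $N$ does not remove the $d^2$ basis-element products. Since $1/p_\mathfrak m\ge 1$, an additive $O(d^4)$ is not absorbed into $O((1/p_\mathfrak m)d^3)$, so as written your (i) misses the stated bound. The fix is to observe that $\mathfrak c$ need never be formed as an ideal product: because $\mathfrak c\supseteq\mathfrak m$, one has $\mathfrak c=\{x\in\mathcal O : xb\in(a,\mathfrak m)\}$, which is precisely the lift of the solution set of the linear system $(M_b\,|\,M_\mathfrak m)X=a$ --- i.e.\ the byproduct of a single (B2) call, obtained in $O(d^3)$ basic operations. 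With that substitution your argument meets the claimed complexity; this is also exactly the point at which your proof and the paper's reconverge.
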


\begin{proof}
  (i): Using (B2) we can compute a fixed quotient $\overline c_0$.
  Moreover we have seen that at the same time we obtain a basis of an ideal $\mathfrak a$ of $\mathcal O$ with $\overline {\mathfrak a} = \mathrm{Ann}(\overline b)$. 
  Invoking (B6) we can compute a generator of the ideal $\overline{\mathfrak a}$.
  Now we choose uniformly distributed elements $\overline q \in \overline{\mathfrak a}$ until $\varphi(\overline c_0 + \overline q) = \varphi(\overline a)/\varphi( \overline b)$.
  If this is the case then $\overline c_0 + \overline q$ is a quotient which is minimal with respect to the Euclidean function. 
  Proposition~\ref{prop:div} shows that if $\overline q$ is uniformly distributed in $\mathrm{Ann}(\overline b)$, then $\overline c_0 + \overline q$ is uniformly distributed in $\overline{(a,\mathfrak m)(b,\mathfrak m)^{-1}}$.
  Now the claim follows from Lemma~\ref{lem:idealsize}.
  \par
  (ii): As in the case of division, we see that the set of tuples $(\overline x,\overline y) \in (\mathcal O/\mathfrak m)^2$ with $\overline x \overline e + \overline y \overline f = \overline 1$ is the set of solutions of a $d \times 3d$ matrix with entries in $\Z$.
  As in addition this system can be solved modulo $N$, the task of finding a suitable tuple $(\overline x,\overline y)$ can be solved using $O(d^3)$ basic operations in $\Z/N\Z$.
  \par
  (iii): Follows from (ii) and (iii).
\end{proof}

\begin{corollary}
  Any basic operation in $(\mathcal O/\mathfrak m)$ can be performed with an expected number of $O((1/p_\mathfrak m)d^3)$ basic operations in $\Z/N\Z$.
\end{corollary}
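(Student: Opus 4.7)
The plan is straightforward: the corollary is a summary statement that aggregates the complexity bounds established individually throughout this section for each of the six basic operations (B1)--(B6). The key observation is that since $p_\mathfrak m \le 1$, every deterministic bound of the form $O(d^3)$ is absorbed into the claimed $O((1/p_\mathfrak m)d^3)$ bound, so it suffices to check that each operation has already been shown to admit a bound of at most this order.

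First I would dispose of the deterministic operations. The paragraphs preceding Algorithm~\ref{alg:div} show that (B1) costs $O(d^3)$ basic operations in $\Z/N\Z$: addition is coordinate-wise in the identification $(\mathcal O/\mathfrak m) \cong \prod_i \Z/n_i\Z$, while multiplication is computed via the structure constants $\Gamma_{i,j}^k$. The same paragraphs show that (B2) reduces to solving the linear system $(M_b \mid M_\mathfrak m)X = a$ modulo $N$, again in $O(d^3)$ operations. Both bounds trivially fit the claim.

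For the probabilistic operations I would simply cite the corresponding earlier results: the proposition on Algorithm~\ref{alg:div} gives (B3); the lemma computing $\mathsf{Ann}(\overline b)$ via a kernel computation followed by Algorithm~\ref{alg:prob_gen} gives (B5); the lemma on Algorithm~\ref{alg:prob_gen} itself gives (B6); and part~(iii) of the lemma immediately preceding the corollary gives (B4). Each of these yields an expected cost of $O((1/p_\mathfrak m)d^3)$ basic operations in $\Z/N\Z$.

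Taking the maximum of these six bounds proves the claim. There is no real obstacle here; the genuine work has already been done in the preceding lemmas, and this corollary exists only to consolidate them into a single reusable complexity statement that can be invoked in the later sections on matrix normal forms.
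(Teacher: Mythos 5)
Your proposal is correct and matches the paper's intent exactly: the corollary carries no separate proof in the paper and is simply the consolidation of the bounds already established for (B1)--(B6), with the deterministic $O(d^3)$ costs absorbed via $p_\mathfrak m \le 1$. Nothing is missing.
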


\section{Applications to matrix normal forms}

When working with algebraic number fields the objects of desire often carry the structure of finitely generated torsion-free modules over $\Z$.
While the structure theorem for modules over $\Z$ asserts the freeness of such modules, the Hermite normal form (HNF) and algorithms for computing it bring them fully under control.
They not only allow for the computation of a basis given a generating set, but they also enable us to solve various algorithmic problems.
\par
Based on the extended GCD, it is straight forward to formulate a naive algorithm for computing the HNF over $\Z$.
Unfortunately, as in the case of Gau\ss ian elimination over $\Q$, coefficient swell occurs.
Although there are various techniques to handle this circumstance, the most natural one is the use of residual methods, which goes back to Iliopoulos \cite{Iliopoulos1989} and Domich, Kannan and Trotter \cite{Domich1987}: Instead of computing the HNF over $\Z$, one computes a normal form over $\Z/d\Z$ for some $d \in \Z$ and lifts the result back to $\Z$.
If $d$ is chosen to be a multiple of the determinant of the lattice spanned by the rows of the matrix, this will yield a correct result.
\par
The aim of this section is to introduce residual methods to the computation of normal forms of $\mathcal O$-modules by passing to a quotient ring $(\mathcal O/\mathfrak m)$ for some suitable integral ideal $\mathfrak m$ and by lifting the result back to $\mathcal O$.

\subsection{Strong echelon form for principal ideal rings}

Given a ring $R$ and a matrix $A \in R^{n\times m}$ denote by $S(A) \subseteq R^m$ the row span of $A$.
The idea of attaching a unique matrix normal form to submodules of $R^m$, where $R$ is a principal ideal ring, goes back to Howell \cite{Howell1986}. 
He introduced a normal form (now called the \textit{Howell normal form}) of submodules of $(\Z/d\Z)^m$ and an algorithm for computing it, such that two modules are equal if and only if their Howell normal forms coincide.
In his PhD thesis Storjohann \cite{Storjohann2000} has generalized this notion to arbitrary principal ideal rings.
\par
In this article we will adapt the Howell normal form to our needs.
For an $R$-module $M \subseteq R^m$ and $1 \leq i \leq m$ we define $S_i(M)$ to be the set of all elements of $M$ with last $i$ entries zero.
For convenience we set $S_i(A) = S_i(S(A))$ if $A$ is matrix over $R$ with $m$ columns.

\begin{definition}
  Let $M \subseteq R^m$ be an $R$-module. A matrix $H = (h_{ij}) \in R^{n \times m}$, $n \geq m$, is called \textit{strong echelon form} of $M$ if and only if
  \begin{enumerate}
    \item[(S1)] For $1 \leq i \leq m$ the $i$-th row of $H$ is zero or $i = \max\{ 1 \leq j \leq m \, | \, h_{ij} \neq 0 \}$. For $i > m$ the $i$-th row of $H$ is zero.
    \item[(S2)] For $1 \leq i \leq m$ the rows $1,\dotsc,i$ generate $S_{m-i}(M)$.
  \end{enumerate}
\end{definition}

To illustrate the definitions consider the following matrices over $\Z/6\Z$:
\[ A = \begin{pmatrix} \overline 0 & \overline 0 \\ \overline 1 & \overline 3 \end{pmatrix}, \quad
   B = \begin{pmatrix} \overline 2 & \overline 0 \\ \overline 5 & \overline 3 \end{pmatrix}, \quad
   C = \begin{pmatrix} \overline 0 & \overline 0 \\ \overline 2 & \overline 0 \\ \overline 5 & \overline 3 \end{pmatrix}. \quad
   D = \begin{pmatrix} \overline 2 & \overline 0 \\ \overline 5 & \overline 3 \\ \overline 0 & \overline 0  \end{pmatrix}. \quad
\]
It is easy to see that they have the same span.
While the matrix $A$ has a minimal number of non-zero rows the element $(\overline 2 ,\overline 0) \in S(A)$ shows that $A$ does not satisfy (S2).
On the other hand the matrix $C$ violates (S1).
Thus only $B$ and $D$ are strong echelon forms of $M$.
\par
A few words on the relation between the strong echelon form and the Howell normal form: 
In contrast to the Howell normal form we ``order'' the basis elements.
This will be important in Section~\ref{subsec:comb} where we describe the combination of strong echelon forms.
Note that we will use the strong echelon form over $(\mathcal O/\mathfrak m)$ only as an auxiliary step to obtain normal forms over $\mathcal O$.
Since this does not require the strong echelon form to be unique, this explains the absence of appropriate restrictions in the definition.
For working with $(\mathcal O/\mathfrak m)$-modules themselves we can recover uniqueness easily by the following steps.
We have to show how to find a fixed representative modulo $(\mathcal O/\mathfrak m)^\times$ and modulo $(\overline d)$ for some $\overline d \in (\mathcal O/\mathfrak m)$.
The former problem can be solved by noting that if $\overline a$ is an element of $(\mathcal O/\mathfrak m)$, then the coset of $\overline a$ modulo $(\mathcal O/\mathfrak m)^\times$ is equal to the set of all $\overline b \in (\mathcal O/\mathfrak m)$ with $(b,\mathfrak m) = (a,\mathfrak m)$.
Thus by choosing a generator of $\overline{(a,\mathfrak m)}$ in a deterministic way we obtain a unique representative.
By reducing the off-diagonal elements modulo the unique HNF basis of $(d,\mathfrak m)$, where $d$ is the corresponding diagonal entry, we obtain unique representatives for the off-diagonal elements.
\par
Based on Howell's approach Storjohann and Mulders describe in \cite{Storjohann1998} a simple algorithm for computing the Howell normal form over $\Z/d\Z$, which easily generalizes to any ring supporting basic operations (Bi), $1 \leq i \leq 6$.
The following modified version yields a strong echelon form.

\begin{algorithm}[(Strong echelon form over principal ideal rings)]\label{alg:strongechelonform}
  Let $A \in R^{n\times m}$ be a matrix with $n \geq m$. The following steps return a strong echelon form of $A$.
  \begin{enumerate}
    \item
      (This puts $A$ into triangular form).
      For $1 \leq i < j \leq n$ compute $(g,s,t,u,v) = \mathsf{xgcd}(a_{j,i},a_{j,j})$ and set
      \[ \left(\begin{matrix} A_j \\ A_i \end{matrix}\right) = \left( \begin{matrix} s & t \\ u & v \end{matrix} \right) \left( \begin{matrix} A_j \\ A_i \end{matrix} \right). \]
    \item
      Augment $A$ with one zero row.
    \item
      For $1 \leq j \leq m$ do the following:
      \begin{enumerate}
        \item
          If $a_{j,j} \neq 0$ compute $c = \mathsf{Ann}(a_{j,j})$ and set $A_{n+1} = c A_j$. If $a_{j,j} = 0$ then set $A_{n+1} = A_j$.
        \item
          For $j+1 \leq i \leq m$ compute $(g,s,t,u,v) = \mathsf{xgcd}(a_{i,i},a_{n+1,i})$ and set
          \[ \left(\begin{matrix} A_i \\ A_{n+1} \end{matrix}\right) = \left( \begin{matrix} s & t \\ u & v \end{matrix} \right) \left( \begin{matrix} A_i \\ A_{n+1} \end{matrix} \right). \]
      \end{enumerate}
    \item Sort the rows such that (S1) is satisfied.
    \item Return $A$.
\end{enumerate}
\end{algorithm}

\subsection{Modular computation of a strong echelon form}\label{subsec:comb}

One of the reasons we have introduced the strong echelon form (instead of using the equally unknown Howell normal form) is the important fact, that it allows for efficient residual computations.
To be more precise let $R$ be a principal ideal ring and $a,b,e,f \in R$ elements such that $ab = 0$ and $1= ea + fb$.
Denote by $\pi_a$ and $\pi_b$ the canonical projections of $R$ onto $R/(a)$ and $R/(b)$ respectively.
By abuse of notation we denote the induced projections $R^m \to (R/(a))^m$ and $R^{n\times m} \to (R/(a))^{n\times m}$ also by $\pi_a$; we do the same for $\pi_b$.
Then for any $R$-module $M \subseteq R^m$ the equation 
\begin{align}  M = 1M = eaM + fbM = ea(M + b R^n) + fb(M + aR^n) \label{eq:dec} \end{align}
holds.
As $M+ aR^n = \pi_a^{-1}(\pi_a(M))$ and $M + bR^n = \pi_b^{-1}(\pi_b(M))$ we see that $M$ can be obtained by lifting the modules $\pi_a(M)$ and $\pi_b(M)$, which are now living over the (hopefully ``smaller'') rings $R/(a)$ and $R/(b)$, back to $R$. 
The following lemma shows that by using the strong echelon form the lifting procedure comes for free.

\begin{lemma}\label{lem:howcrt1}
  Assume that $A \in R^{n \times m}$ is a matrix such that $\pi_a(A) \in (R/(a))^{n\times m}$ is a strong echelon form of $\pi_a(M)$ and every non-zero diagonal Element of $A$ is a divisor of $a$. Then $A$ is a strong echelon form of $M + aR^n$.
\end{lemma}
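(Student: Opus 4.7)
The plan is to verify the two defining axioms (S1) and (S2) for $A$ as a matrix over $R$, leveraging the analogous properties of $\pi_a(A)$ over $R/(a)$ together with the divisibility assumption on the diagonal.

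For (S1) I would argue as follows. From the hypothesis that $\pi_a(A)$ satisfies (S1) over $R/(a)$, for each row index $i\le m$ the entries $a_{ik}$ with $k>i$ all lie in $(a)$, and the rows of $A$ of index $i>m$ lie in $aR^m$. Since these contributions live in $aR^m\subseteq M+aR^n$ they do not affect the row span of $A$, so without loss of generality I may take each super-diagonal entry and each row beyond the $m$-th to be $0$ in $R$. Combined with the reading of the hypothesis that each diagonal entry $a_{ii}$ is a non-zero divisor of $a$ (zero diagonal entries of $\pi_a(A)$ being lifted naturally to $a_{ii}=a$, which itself divides $a$), every row $A_i$ with $i\le m$ has its maximal non-zero column at position $i$, giving (S1).

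For (S2) I would induct on $i$ that $\langle A_1,\dotsc,A_i\rangle=S_{m-i}(M+aR^n)$. The base $i=0$ is vacuous since $S_m(M+aR^n)=\{0\}$. The inclusion $\subseteq$ is immediate: (S2) for $\pi_a(A)$ gives $\pi_a(A_j)\in\pi_a(M)$, hence $A_j\in M+aR^n$, and by (S1) the last $m-i$ coordinates of $A_j$ vanish for $j\le i$. For the reverse inclusion, let $y\in S_{m-i}(M+aR^n)$. Then $\pi_a(y)\in S_{m-i}(\pi_a(M))$, so by (S2) for $\pi_a(A)$ there exist $\bar r_j\in R/(a)$ with $\pi_a(y)=\sum_{j=1}^i \bar r_j\pi_a(A_j)$; lifting the $\bar r_j$ arbitrarily to $r_j\in R$ and setting $y':=y-\sum_{j=1}^i r_jA_j$, one finds $y'\in aR^m$ with its last $m-i$ coordinates zero, say $y'=(az_1,\dotsc,az_i,0,\dotsc,0)$.

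Here the divisibility hypothesis does its work: writing $a=\alpha a_{ii}$ and setting $\beta=\alpha z_i$, the vector $\beta A_i$ has $i$-th coordinate $\beta a_{ii}=az_i=y'_i$ and, by (S1), vanishing coordinates beyond position $i$. Hence $y'-\beta A_i$ has its last $m-i+1$ coordinates zero and still lies in $M+aR^n$, so by the inductive hypothesis it belongs to $\langle A_1,\dotsc,A_{i-1}\rangle$; adding back $\beta A_i$ and $\sum_{j=1}^i r_jA_j$ places $y$ in $\langle A_1,\dotsc,A_i\rangle$, which closes the induction.

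The main obstacle, and the reason the hypothesis is phrased in terms of non-zero diagonal entries, is the careful handling of rows for which $\pi_a(a_{ii})$ vanishes: for the pivot step in the induction to succeed one needs $a_{ii}$ to be a non-zero divisor of $a$ in $R$, the natural choice being $a_{ii}=a$. With this reading of the hypothesis (and the parallel normalisation of super-diagonal entries and excess rows to $0$), the argument goes through uniformly in $i$.
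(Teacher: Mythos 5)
Your proof is correct and follows essentially the same route as the paper's: an induction on the pivot column in which an element of $S_{m-i}(M+aR^n)$ is first reduced modulo $(a)$ using property (S2) of $\pi_a(A)$, and the residual multiple of $a$ in the $i$-th coordinate is then cleared using the hypothesis that $a_{ii}$ divides $a$. The lifting conventions you make explicit (super-diagonal entries and excess rows lifted to $0$, vanishing diagonal entries lifted to $a$) are precisely the ones the paper leaves implicit --- and, as you correctly observe, without them the statement would fail, since (S1) for $A$ and the pivot step both require them.
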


\begin{proof}
  Given $v \in S_j(M + aR^n)$ we want to show that $v \in S(A)$.
  We prove the statement by induction on $j$.
  If $\pi_a(v_j) = 0$, then $v_j$ is a multiple of $a$.
  In particular there exists $r \in R$ such that $v - r A_j \in S_{j+1}(M + aR^n)$.
  If $\pi_a(v_j) \neq 0$ then property~(S1) implies that there exists $r_i \in R$ with $v - \sum_{i=1}^j r_i A_i \in S_{j}(aR^n)$.
  Thus again there exists $r \in R$ such that $v = \sum_{i=1}^j r_i A_i - r A_j \in S_{j+1}(M+aR^n)$.
  This implies $M + a R^m = S(A)$ and at the same time we have shown that $A$ satisfies property~(S1) and (S2).
\end{proof}

Thus by computing strong echelon forms over $R/(a)$ and $R/(b)$ we can compute strong echelon forms of $M + a R^n$ and $M + bR^n$. 
We now turn to the recombination step.
Let $A$ and $B$ be strong echelon forms of $M+ aR^n$ and $M+ bR^n$ respectively.
By padding $A$ or $B$ with zero rows we may assume that $A$ and $B$ have the same number of rows.

\begin{lemma}\label{lem:howcrt2}
   The matrix $fb A + eaB$ is a strong echelon form of $M$.
\end{lemma}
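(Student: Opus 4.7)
The hypotheses $ab=0$ and $ea+fb=1$ force $(ea)(fb)=ef\cdot ab=0$, so $ea$ and $fb$ are orthogonal idempotents summing to $1$. I would start by noting that this makes $\phi\colon R\to R/(a)\times R/(b)$, $r\mapsto (\pi_a(r),\pi_b(r))$, a ring isomorphism that extends componentwise to an $R$-module isomorphism $R^m\cong (R/(a))^m\oplus (R/(b))^m$. Writing $C:=fbA+eaB$, this immediately gives $\pi_a(C)=\pi_a(A)$ and $\pi_b(C)=\pi_b(B)$. A basic auxiliary fact I would establish up front is that any $R$-submodule $N\subseteq R^m$ decomposes under $\phi$ as $\pi_a(N)\oplus\pi_b(N)$: given $(x,y)$ in the right-hand side, pick $n_1,n_2\in N$ projecting to $x$ and $y$ respectively, and observe that $fb\,n_1+ea\,n_2\in N$ projects to $(x,y)$.

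With these tools in hand, the plan is to verify $S(C)=M$, property (S1), and property (S2) in turn, each by reducing to the corresponding statement for the projections $\pi_a(A)$ and $\pi_b(B)$, which are strong echelon forms of $\pi_a(M)$ and $\pi_b(M)$ by the construction via Lemma~\ref{lem:howcrt1}. For $S(C)=M$ I would compute $\pi_a(S(C))=S(\pi_a(C))=S(\pi_a(A))=\pi_a(S(A))=\pi_a(M+aR^n)=\pi_a(M)$ and symmetrically for $b$; combined with the decomposition fact, this yields $\phi(S(C))=\phi(M)$ and hence $S(C)=M$, which is essentially a CRT version of~(\ref{eq:dec}). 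For (S1), the entries $c_{ij}$ with $j>i$ vanish because $a_{ij}=b_{ij}=0$ by the respective (S1)'s. If the $i$-th row of $C$ is non-zero then under $\phi$ at least one of $\pi_a(A_i)$ and $\pi_b(B_i)$ is non-zero, and (S1) for the corresponding projected strong echelon form forces $\pi_a(a_{ii})\neq 0$ or $\pi_b(b_{ii})\neq 0$, so $c_{ii}\neq 0$.

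For (S2), the $R$-span of $C_1,\ldots,C_i$ maps under $\phi$ to the set $\{(\sum_k s_k \pi_a(A_k),\ \sum_k t_k \pi_b(B_k)) : s_k\in R/(a),\ t_k\in R/(b)\}$ because CRT permits the components to be scaled independently, and this factors as $\mathrm{span}_{R/(a)}(\pi_a(A_1),\ldots,\pi_a(A_i))\oplus \mathrm{span}_{R/(b)}(\pi_b(B_1),\ldots,\pi_b(B_i))$; applying (S2) for the two projected strong echelon forms, this equals $S_{m-i}(\pi_a(M))\oplus S_{m-i}(\pi_b(M))=\phi(S_{m-i}(M))$. The step I expect to be the main obstacle is (S1): one cannot argue directly from $A$ and $B$ over $R$, since a diagonal entry $a_{ii}$ can be non-zero in $R$ yet vanish in $R/(a)$; the argument has to route through the projected strong echelon forms, relying on the stronger property of the lifts produced by Lemma~\ref{lem:howcrt1}.
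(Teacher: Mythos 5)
Your proof is correct, but it takes a genuinely different route from the paper's. The paper argues entirely over $R$: it shows $S(fbA+eaB)=M$ by exploiting that $fb$ and $ea$ are idempotents (so $fbA_i=fb(fbA_i+eaB_i)$ lies in the span of the sum), asserts that (S1) passes from $fbA$ and $eaB$ to their sum, and obtains (S2) by splitting $v\in M$ as $v=fbv+eav$. You instead pass through the CRT isomorphism $R\cong R/(a)\times R/(b)$, observe $\pi_a(C)=\pi_a(A)$ and $\pi_b(C)=\pi_b(B)$ for $C=fbA+eaB$, and reduce each of $S(C)=M$, (S1) and (S2) to the corresponding property of the \emph{projected} matrices over the two quotient rings.

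Your route buys something real precisely at (S1), where you predicted the obstacle. The paper's claim that $fbA$ inherits (S1) from $A$ is false in general: take $R=\Z/6\Z$, $a=\overline 2$, $b=\overline 3$, $ea=\overline 4$, $fb=\overline 3$, $M=R\times\{\overline 0\}$; then $A$ with rows $(\overline 1,\overline 0),(\overline 1,\overline 2)$ is a strong echelon form of $M+aR^2=R\times\overline 2R$ and $B$ with rows $(\overline 1,\overline 0),(\overline 0,\overline 3)$ one of $M+bR^2=R\times\overline 3R$, yet $fbA+eaB$ has second row $(\overline 3,\overline 0)$, which violates (S1). So the lemma is not true under its literal hypotheses alone; what saves it in the paper's application is exactly the extra structure you invoke, namely that $\pi_a(A)$ and $\pi_b(B)$ are strong echelon forms of $\pi_a(M)$ and $\pi_b(M)$, as guaranteed by the construction and the normalization surrounding Lemma~\ref{lem:howcrt1}. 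Your argument makes this dependence explicit and thereby closes the gap, since (S1) for $\pi_a(A)$ forces $\pi_a(a_{ii})\neq 0$ whenever $\pi_a(A_i)\neq 0$, hence $c_{ii}\neq 0$ whenever $C_i\neq 0$. The only point left implicit in your write-up is the identity $S_{m-i}(\pi_a(M))\oplus S_{m-i}(\pi_b(M))=\phi(S_{m-i}(M))$ used for (S2); the nontrivial inclusion needs the same trick as your decomposition fact (multiply a lift of an element of $S_{m-i}(\pi_a(M))$ by $fb$ to annihilate the trailing coordinates, which lie in $(a)=(ea)$), but this is routine.
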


\begin{proof}
  Firstly we show $M = S(fb A + eaB)$. 
  Equation~(\ref{eq:dec}) implies that $M$ is generated by $fbA_i,eaB_i$, $1 \leq i\leq n$.
  Therefore it is sufficient to prove $fb A_i, eaB_i \in S(fbA + eaB)$.
  As $fb$ is an idempotent, i.e., $(fb)^2 = fb$. we have $fb A_i = (fb)^2 A_i + (fb)(ea)B_i = fb(fb A_i + eaB_i) \in S(fbA + eaB)$ and analogously $eaB_i \in S(fbA +eaB)$.
  \par
  Sine $eaB$ and $fbA$ have property~(S1), so does the sum.
  Property~(S2) follows by decomposing an element $v \in M$ into $v = fbv + eav$ and applying property~(S2) of $eaB$ and $fbA$.
\end{proof}

Now let $\mathfrak m$ and $\mathfrak n$ be coprime integral ideals of $\mathcal O$. 
We want to apply the preceding discussion to the computation of a strong echelon form of an $(\mathcal O/\mathfrak m \mathfrak n)$-module $M$.
Denote by $\overline a$ and $\overline b$ generators of the ideals $\overline {\mathfrak m}$ and $\overline{\mathfrak n}$ in $(\mathcal O/\mathfrak m \mathfrak n)$. 
Then $\overline a \overline b = 0$, and $(\mathcal O/\mathfrak m \mathfrak n)/(\overline a)$ and $(\mathcal O/\mathfrak m \mathfrak n)/(\overline b)$ are isomorphic to $\mathcal O/\mathfrak m$ and $\mathcal O/\mathfrak n$ respectively.
We have canonical projections $\pi_a = \pi_\mathfrak m \colon (\mathcal O/\mathfrak m\mathfrak n) \to (\mathcal O/\mathfrak m)$ and $\pi_b = \pi_\mathfrak n \colon (\mathcal O/\mathfrak m\mathfrak n) \to (\mathcal O/\mathfrak n)$.
As $\overline a$ and $\overline b$ are coprime, we can compute $\overline e,\overline f \in (\mathcal O/\mathfrak m \mathfrak n)$ such that $\overline e \overline a + \overline f \overline b = 1$.
Thus we are in a situation where we can apply Lemma~\ref{lem:howcrt1} and~\ref{lem:howcrt2}.
The only missing step is the normalization of the diagonal elements in the assumption of Lemma~\ref{lem:howcrt2}.
\par
We assume that $A'$ is a matrix over $(\mathcal O/\mathfrak m\mathfrak n)$ such that $\pi_\mathfrak m(A')$ is a strong echelon form of $\pi_\mathfrak m(M)$.
We define a new matrix $A$ over $(\mathcal O/\mathfrak m\mathfrak n)$ by setting the $i$-th row $A_i$ to be
\[ A_i = \overline b A_i' + (\overline a \delta_{i,j})_{1 \leq j \leq n} \]
for $1 \leq i \leq n$, where $\delta_{i,j}$ denotes the Kronecker delta.
As $\overline b$ is a unit modulo $\mathfrak m$ and $\pi_\mathfrak m(\overline a) = 0$, the matrix $\pi_\mathfrak m(A)$ is also a strong echelon form of $\pi_\mathfrak m(M)$.
We claim that $A$ satisfies the assumption of Lemma~\ref{lem:howcrt1}.
To prove this we show that for all $\overline d \in (\mathcal O/\mathfrak m \mathfrak n)$ the element $\overline b \overline d + \overline a$ is a divisor of $\overline a$ in $(\mathcal O/\mathfrak m \mathfrak n)$.
Note that this is equivalent to $\min(v_\mathfrak p(bd +a),v_\mathfrak p(\mathfrak m \mathfrak n)) \leq \min(v_\mathfrak p(a),v_\mathfrak p(\mathfrak m \mathfrak n))$ for all prime divisor $\mathfrak p$ of $\mathfrak m \mathfrak n$.
If $\overline d = 0$ this holds obviously.
Therefore we may assume $\overline d \neq 0$.
But then the claim follows easily by noting that $v_\mathfrak p(a) = v_\mathfrak p(\mathfrak m)$ if $\mathfrak p \mid \mathfrak m$ and $v_\mathfrak p(b) > 0 = v_\mathfrak p(a)$ if $\mathfrak p \mid \mathfrak n$.

\subsection{Normal forms for modules over $\mathcal O$}

Since $\mathcal O$ is in general not a principal ideal domain, finitely generated torsion-free modules over $\mathcal O$ are not necessarily free.
For this reason the connection between such modules and matrix normal forms is more subtle then in the principal ideal domain case.
While for any $\mathcal O$-module $M \subseteq \mathcal O^m$ there exists some matrix $A \in \mathcal O^{m\times n}$ such that $S(A) = M$, we cannot expect to find a triangular shaped matrix with this property.
For if this is the case, $M$ is the direct sum of the rows of $A$ and therefore free over $\mathcal O$.
\par
Although $\mathcal O$ is not a principal ideal domain, the properties of being a Dedekind ring are strong enough to prove a weakened classification theorem of $\mathcal O$-modules.
More precisely Steinitz~\cite{Steinitz1911,Steinitz1912} has shown that there exists fractional ideals $\mathfrak a_1,\dotsc,\mathfrak a_n$ of $K$ and a matrix $A \in K^{n \times m}$ with rows $A_1,\dotsc,A_n$ such that $M = \mathfrak a_1 A_1 \oplus \dotsb \oplus \mathfrak a_n A_n$.
To work with these objects, Cohen \cite{Cohen1996} has introduced the notion of a \textit{pseudomatrix}, which is just a pair $((\mathfrak a_i)_{1\leq i \leq n},A)$ consisting of a family of fractional ideals of $K$ (the \textit{coefficient ideals}) and a matrix $A \in K^{n \times m}$.
If $\mathcal P = ((\mathfrak a_i)_i,A)$ is such a pseudomatrix, we define $S(\mathcal P)$ to be $\sum_{i=1}^n \mathfrak a_i A_i$, the \textit{span} of the pseudomatrix $\mathcal P$.
In case $\sum \mathfrak a_i A_i = \bigoplus \mathfrak a_i A_i$ we call $\mathcal P$ a \textit{nice} pseudomatrix.
Note that $\mathcal P$ is nice if $A$ is of triangular shape.
\par
The problem of computing a nice pseudomatrix goes back to Bosma and Pohst \cite{Bosma1991}.
Based on similar ideas, Cohen introduced in \cite{Cohen1996} the notion of pseudo Hermite normal form  (pseudo-HNF) of a module---similar to the HNF over principal ideal domains---and described an algorithm for computing it.
To be more precise, a pseudomatrix $\mathcal P = ((\mathfrak a_i),A)$ with span $M$ is called \textit{a pseudo-HNF of $M$}, if $A$ is a lower triangular matrix with $1$ being the last non-zero element in each non-zero row.
By choosing the off-diagonal elements in fixed sets of coset representatives, the pseudo-HNF of an $\mathcal O$-module is unique.
Recently, Biasse and Fieker \cite{Biasse2012} have modified Cohen's algorithm to formulate a provable polynomial time algorithm for computing the pseudo-HNF.

\subsection{From $\mathcal O$ to $(\mathcal O/\mathfrak m)$ to $\mathcal O$}

Let $\mathcal P = ((\mathfrak a_i),A)$ be a pseudomatrix.
So far the underlying idea of all known algorithms for computing a nice pseudomatrix of $S(\mathcal P)$  is to transform $A$ into triangular shape, while carefully adjusting the coefficient ideals ensuring that the span does not change.
The necessary modifications of the coefficient ideals are the heart and at the same time the bottleneck of these algorithms.
In \cite{Biasse2012} even costly lattice reduction algorithms are necessary to bound the size of the objects during the algorithm and to ensure polynomial time complexity.
\par
We now describe how most of the ideal arithmetic can be avoided by passing to a suitable quotient ring of $\mathcal O$.
From now on we assume that the span $M = S(\mathcal P)$ is an $\mathcal O$-module of rank $m$ contained in $\mathcal O^m$ and $A \in K^{n \times m}$ with $n \geq m$.
As in the integer case the key idea is that there exists an integral ideal $\mathfrak m$ of $\mathcal O$ such that $\mathfrak m \mathcal O^m \subseteq M$.
Denote by $\pi_\mathfrak m$ the canonical projection $\mathcal O \to \mathcal O / \mathfrak m$ and the induced projections on $\mathcal O^m$ and $\mathcal O^{n\times m}$.

\begin{algorithm}
  The following steps return a matrix $\overline B \in (\mathcal O/\mathfrak m)^{n\times m}$ such that $S(\overline B) = \pi_\mathfrak m(S(\mathcal P))$.
  \begin{enumerate}
    \item
      For $1 \leq i \leq n$ find elements $a_i \in K$ such that $\mathfrak b_i = a_i \mathfrak a_i$ is integral and coprime to $\mathfrak m$, and divide row $A_i$ by $a_i$.
    \item
      For $1 \leq i, j \leq m$ write $A_{ij} = a_{ij}/b_{ij}$ with $a_{ij},b_{ij} \in \mathcal O$.
    \item
      return $\overline B = ( \overline a_{ij} \overline b_{ij}^{-1})_{i,j}$.
  \end{enumerate}
\end{algorithm}

A few remarks on the correctness. Step~(i) does not change the span and the new coefficient ideals $\mathfrak b_i$---being coprime to $\mathfrak m$---satisfy $\pi_\mathfrak m(\mathfrak b_i) = (\mathcal O/\mathfrak m)$.
Moreover the relation $\mathfrak m \mathcal O^m \subseteq M \subseteq \mathcal O^m$ implies the the denominator of all matrix entries are coprime to $\mathfrak m$ and thus invertible modulo $\mathfrak m$.
Finding the elements $a_i$ in Step~(i) is just another application of the approximation theorem (see \cite[Corollary 1.3.9]{Cohen2000}) and can therefore be performed using Belabas' algorithm.
\par
Applying Algorithm~\ref{alg:strongechelonform} to the matrix $\overline B$ obtained in the preceding algorithm we arrive---after removing zero rows---at a matrix $C \in \mathcal O^{m\times m}$ such that $\pi_\mathfrak m(C)$ is a strong echelon form of $\pi_\mathfrak m(M)$.
The connection to the original module $M$ is given by the following lemma.

\begin{lemma}\label{lem:lem1}
  Assume that $C \in \mathcal O^{m\times m}$ is a matrix such that $\pi_\mathfrak m(C)$ is a strong echelon form of $\pi_\mathfrak m(M)$.
  Then the pseudomatrix $\mathcal P' = (I,C)$ with $I = (\mathcal O,\dotsc,\mathcal O,\mathfrak m,\dotsc,\mathfrak m)$ and $D = (C^t|\mathbf{I}_m^t)^t$ satisfies $S(\mathcal P') = M$.
\end{lemma}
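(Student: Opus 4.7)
The plan is to verify both inclusions $S(\mathcal P') \subseteq M$ and $M \subseteq S(\mathcal P')$. First I would unravel the pseudomatrix $\mathcal P'$: reading the statement alongside the displayed $D$, the intended object has $2m$ rows, namely the rows $C_1, \dots, C_m$ of $C$ with coefficient ideals $\mathcal O$, followed by the standard basis vectors $e_1, \dots, e_m$ with coefficient ideals $\mathfrak m$. Thus
\[ S(\mathcal P') = \sum_{i=1}^m \mathcal O \cdot C_i + \mathfrak m \mathcal O^m. \]

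For $S(\mathcal P') \subseteq M$, I would invoke the standing assumption $\mathfrak m \mathcal O^m \subseteq M$ underlying the passage to $\mathcal O/\mathfrak m$, which disposes of the last $m$ generators, so it suffices to show that each row $C_i$ lies in $M$. By hypothesis $\pi_\mathfrak m(C_i)$ is in the row span of $\pi_\mathfrak m(C)$, and by property~(S2) applied at $i = m$ this span equals $\pi_\mathfrak m(M)$. Therefore $C_i \in \pi_\mathfrak m^{-1}(\pi_\mathfrak m(M)) = M + \mathfrak m \mathcal O^m = M$, which settles this inclusion.

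For the reverse inclusion, I would take an arbitrary $v \in M$. Property~(S2) says that the rows of $\pi_\mathfrak m(C)$ generate $\pi_\mathfrak m(M)$ as an $(\mathcal O/\mathfrak m)$-module, so there exist $r_1, \dots, r_m \in \mathcal O$ with $\pi_\mathfrak m(v) = \sum_{i=1}^m \pi_\mathfrak m(r_i)\pi_\mathfrak m(C_i)$. Lifting, $v - \sum_i r_i C_i$ lies in $\ker \pi_\mathfrak m = \mathfrak m \mathcal O^m$, and hence
\[ v \in \sum_{i=1}^m \mathcal O\, C_i + \mathfrak m \mathcal O^m = S(\mathcal P'). \]

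There is no real obstacle here beyond correctly interpreting $\mathcal P'$; once one sees that the trailing $m$ rows with coefficient ideals $\mathfrak m$ contribute exactly $\mathfrak m \mathcal O^m$, both inclusions reduce to the identity $\pi_\mathfrak m^{-1}(\pi_\mathfrak m(M)) = M$, which holds precisely because $\mathfrak m \mathcal O^m \subseteq M$. The strong echelon hypothesis enters only through property~(S2) at $i = m$, which guarantees that the rows of $\pi_\mathfrak m(C)$ already generate all of $\pi_\mathfrak m(M)$; this is the step I would verify first, since it is the single point at which the definition is actually used.
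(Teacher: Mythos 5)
Your proof is correct and follows essentially the same route as the paper's (very terse) argument: the inclusion $M \subseteq S(\mathcal P')$ via property~(S2) and lifting modulo $\mathfrak m\mathcal O^m$ is exactly the step the paper writes out, and the reverse inclusion, which the paper leaves implicit with ``now the claim follows,'' is handled by the same observation $\pi_\mathfrak m^{-1}(\pi_\mathfrak m(M)) = M$ that you identify. No issues.
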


\begin{proof}
  Let $v$ be an element of $M$.
  As $\pi_\mathfrak m(v) \in \pi_\mathfrak m(M) = S(\pi_\mathfrak m(B))$, there exists $a_i \in \mathcal O$ such that $v - \sum a_i C_i \in \mathfrak m \mathcal O^m$.
  Now the claim follows.
\end{proof}

Thus by computing a preimage $C = (c_{ij})$ of a strong echelon form over the ring $(\mathcal O/\mathfrak m)$, we arrive at the following pseudomatrix spanning the original module (we write the coefficient ideals in front of the corresponding rows):

\newcommand\bigzero{\makebox(0,0){\text{\huge0}}}
\newcommand{\bord}[1]{\multicolumn{1}{c|}{ #1 }}
\makeatletter
\def\vcdots{\vbox{\baselineskip2\p@ \lineskiplimit\z@
        \kern2\p@\hbox{.}\hbox{.}\hbox{.}\kern2\p@}}
\makeatother

\begin{equation}
  \mathcal P' = \begin{array}{c} \mathcal O \\ \mathcal O \\ \mathcal O \\ \mathcal O \\ \mathcal O \\ \mathfrak m \\ \mathfrak m \\ \mathfrak m \\ \mathfrak m \\ \mathfrak m \end{array}
  \left(
    \begin{array}{ccccc}\cline{1-1}
      \bord {c_{1,1}}    &        &    &    &  \\ \cline{2-2}
    
      \ast & \bord {c_{2,2}}      &     & \bigzero    &  \\ 
    \ast & \ast    & \dots    &     &  \\ \cline{4-4}
    \ast      & \ast & \dots & \bord {\dots}   &  \\ \cline{5-5}
    \ast      &  \ast  & \dotsb     & \ast & \bord {c_{m,m}} \\ \cline{1-5}
    1 & & & & \\ 
    & 1 & & \bigzero & \\ 
    &  & \dots & & \\
     & \bigzero & & \dots & \\
     & & & & 1 \\ 
  \end{array}\right).
\end{equation}

We now apply the classical pseudo-HNF algorithm of Cohen to this pseudomatrix.
The special shape allows us to skip most of the steps and we actually never have to work with all of $\mathcal P'$.

\begin{algorithm}[(Demodularization)]\label{alg:demod}
  Let $C \in \mathcal O^{m\times m}$ be a matrix such that $\pi_\mathfrak m(C)$ is a strong echelon form of $\pi_\mathfrak m(M)$.
  The following steps return a pseudo-HNF with span equal to $M$.
  \begin{enumerate}
    \item
      For $i=m,\dotsc,1$ do the following:
    \item
      Let $\mathfrak g = (c_{i,i},\mathfrak m)$ and compute $x \in (c_{i,i})\mathfrak g^{-1}$, $y \in \mathfrak m \mathfrak g^{-1}$ such that $1 = x+ y$.
    \item
      Set $\mathfrak b_i = \mathfrak g$, $B_i = x A_i / c_{i,i}$ and $B_{i,i} = 1$.
    \item
      return $((\mathfrak b_i)_{1\leq i \leq m}, B)$.
  \end{enumerate}
\end{algorithm}

\begin{theorem}
  Algorithm~\ref{alg:demod} is correct.
\end{theorem}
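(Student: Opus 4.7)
The plan is to verify the two requirements on the output $((\mathfrak b_i), B)$: that $B$ is lower triangular with $1$ as the last nonzero entry in each row, and that its span as a pseudomatrix equals $M$. The triangular structure is immediate from the construction, since $\pi_\mathfrak m(C)$ being a strong echelon form forces $C$ itself to be lower triangular with nonzero diagonal entries, and then $B_i = (x_i/c_{i,i}) C_i + y_i e_i$ has entry $x_i + y_i = 1$ in column $i$ and zeros in columns $j > i$.

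For the span equality I would first invoke Lemma~\ref{lem:lem1} to identify $M = \sum_i \mathcal O C_i + \sum_i \mathfrak m e_i$. The inclusion $S((\mathfrak b_i), B) \subseteq M$ is direct from the defining conditions $x_i \in (c_{i,i})\mathfrak g_i^{-1}$ and $y_i \in \mathfrak m \mathfrak g_i^{-1}$: these force $\mathfrak g_i (x_i/c_{i,i}) \subseteq \mathcal O$ and $\mathfrak g_i y_i \subseteq \mathfrak m$, hence $\mathfrak g_i B_i \subseteq \mathcal O C_i + \mathfrak m e_i \subseteq M$.

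For the reverse inclusion I would argue locally at each prime $\mathfrak p$ of $\mathcal O$. At $\mathfrak p \nmid \mathfrak m$, both localizations equal $\mathcal O_\mathfrak p^m$: for $M_\mathfrak p$ from $\mathfrak m \mathcal O^m \subseteq M \subseteq \mathcal O^m$ together with $\mathfrak m_\mathfrak p = \mathcal O_\mathfrak p$, and for $S((\mathfrak b_i), B)_\mathfrak p$ because the $\mathfrak g_i$ become trivial and the entries $x_i/c_{i,i} \in \mathfrak g_i^{-1}$ become $\mathfrak p$-integral, making $B$ a lower triangular unimodular matrix over $\mathcal O_\mathfrak p$. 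At $\mathfrak p \mid \mathfrak m$, both sides are full-rank $\mathcal O_\mathfrak p$-lattices in $K^m$ with $S((\mathfrak b_i), B)_\mathfrak p \subseteq M_\mathfrak p$, so equality reduces to matching their determinants as ideals of $\mathcal O_\mathfrak p$. The determinant of $S((\mathfrak b_i), B)_\mathfrak p$ is $\prod_i \mathfrak g_{i,\mathfrak p}$ (since $\det B = 1$); for $M_\mathfrak p$ I would exploit $\mathfrak m \mathcal O_\mathfrak p^m \subseteq M_\mathfrak p \subseteq \mathcal O_\mathfrak p^m$ with the identification $M_\mathfrak p / \mathfrak m \mathcal O_\mathfrak p^m \cong \pi_\mathfrak m(M)_\mathfrak p$ and read off the $\mathcal O_\mathfrak p/\mathfrak m_\mathfrak p$-length of this quotient from the lower triangular diagonal of $\pi_\mathfrak m(C)_\mathfrak p$, yielding the same product.

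The main obstacle is this determinant comparison at primes dividing $\mathfrak m$: it is the sole place where the strong echelon form property of $\pi_\mathfrak m(C)$ enters essentially, and care is needed to translate the triangular structure of the generators of $\pi_\mathfrak m(M)_\mathfrak p$ into the correct $\mathcal O_\mathfrak p$-module length so that $\det M_\mathfrak p$ matches $\prod_i \mathfrak g_{i,\mathfrak p}$ exactly.
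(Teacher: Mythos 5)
Your argument is correct in outline, but it takes a genuinely different route from the paper's. The paper works directly with the pseudomatrix $\mathcal P'$ of Lemma~\ref{lem:lem1}: for each $i$ it invokes Cohen's two-row transformation \cite[Prop.~1.3]{Cohen1996} to replace the pair of rows $\bigl((c_{i,i}),\,C_i/c_{i,i}\bigr)$ and $(\mathfrak m,\,e_i)$ by $(\mathfrak g_i,\,B_i)$ plus a leftover row with coefficient ideal $\mathfrak m\mathfrak g_i^{-1}$ whose $i$-th entry vanishes, and then uses property (S2) of the strong echelon form to show that this leftover row already lies in the span of the rows $C_1,\dotsc,C_{i-1}$ together with $\mathfrak m e_1,\dotsc,\mathfrak m e_{i-1}$, hence may be discarded. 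You instead prove the easy inclusion $S((\mathfrak b_i),B)\subseteq M$ directly and get the reverse inclusion by an index comparison of full-rank local lattices. Both proofs use the strong echelon property essentially, just in different guises: the paper uses (S2) to absorb a redundant row, you need (S1) and (S2) to compute $[\mathcal O_\mathfrak p^m:M_\mathfrak p]$. The step you flag as the main obstacle does go through: the filtration $0=S_m(\pi_\mathfrak m(M))\subseteq\dotsb\subseteq S_0(\pi_\mathfrak m(M))=\pi_\mathfrak m(M)$ has $i$-th successive quotient isomorphic, via projection to the $i$-th coordinate, to the ideal $(\overline{c_{i,i}})$ of $\mathcal O/\mathfrak m$ (surjectivity from (S2), the identification of the image from (S1)), whence $\lvert\pi_\mathfrak m(M)\rvert=\prod_i\inorm(\mathfrak m)/\inorm(\mathfrak g_i)$ and the index ideal of $M$ in $\mathcal O^m$ equals $\prod_i\mathfrak g_i=\det(B)\prod_i\mathfrak b_i$ prime by prime. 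What each approach buys: yours is self-contained (no appeal to Cohen's proposition) and yields the determinantal identity as a byproduct, but leans on the rank-$m$ hypothesis and lattice-index machinery; the paper's mirrors the elimination that motivates the algorithm and shows concretely why each auxiliary row is superfluous. Two small points of care on your side: take $C$ to be the lift with exact zeros above the diagonal so that $B$ is literally lower triangular, and note that a diagonal entry of $\pi_\mathfrak m(C)$ may be zero (then $\mathfrak g_i=\mathfrak m$ and one takes $x_i=0$, $y_i=1$), which your length computation handles but your phrase ``nonzero diagonal entries'' does not.
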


\begin{proof}
  For the proof it is convenient to think of all operations applied to the pseudomatrix $\mathcal P'$ in (5.2), which actually spans the module $M$ by Lemma~\ref{lem:lem1}.
  We now take a look at Step~(ii) and Step~(iii).
  For the sake of convenience we consider only the case $i=m$.
  By \cite[Prop. 1.3]{Cohen1996} the pseudomatrices
  \begin{equation*} 
    \begin{array}{c} (c_{m,m}) \\ \mathfrak m \end{array}
  \left(
    \begin{array}{cccc}
      {c_{m,1}}/{c_{m,m}}    &   \dotsc    & c_{m,{m-1}}/c_{m,m}   & 1  \\
        0                   & \dotsb & 0 & 1 \\
  \end{array}\right).
\end{equation*}
and
\begin{equation*}
  \begin{array}{c} \mathfrak g \\ \mathfrak m \mathfrak g^{-1} \end{array}
  \left(
    \begin{array}{cccc}
      x ({c_{m,1}}/{c_{m,m}})    &   \dotsc    & x (c_{m,{m-1}}/c_{m,m})   & 1  \\
      - c_{m,1} & \dotsc & -c_{m,m-1} & 0 \\
  \end{array}\right).
\end{equation*}
span the same module. 
We need to show that the second row of the latter pseudomatrix is superfluous.
Let $v$ be in the span of the second row.
In particular $v \in S(M)$ and $\pi_\mathfrak m(v) \in \pi_\mathfrak m(M) = S(\pi_\mathfrak m(C))$.
As the last entry is zero we have $\pi_\mathfrak m(v) \in S_1(\pi_\mathfrak m(C))$.
As $\pi_\mathfrak m(C)$ is a strong echelon form this implies that there exists $r_j \in \mathcal O$ such that $v - \sum_{j=1}^{m-1} r_j C_j \in S_1(\mathfrak m\mathcal O^m)$.
Thus $v = \sum_{j=1}^{m-1}r_j C_j + \sum_{j=1}^{m-1} s_j e_j$ for some $s_j \in \mathfrak m$ and $e_j = (\delta_{ji})_{1 \leq i \leq m}$.
\end{proof}

A few remarks on the complexity.
While the inversion of ideals requires at most $O(d^3)$ operations using a precomputed $2$-element representation of the codifferent, the multiplication requires $O(d^4)$ operations if both ideals are given by their $\Z$-bases.
Therefore a naive approach to Step~(ii) requires $O(d^4)$ operations.
But we can do better by noting that 
\[ \mathfrak m\mathfrak g^{-1} = (\mathfrak m (a)^{-1} \cap \mathcal O) \text{ and } (a)\mathfrak g^{-1} = (\mathfrak m(a)^{-1} \cap \mathcal O)^{-1} \cap \mathcal O. \]
Now the ideal product involves a principal ideal and can be performed using at most $O(d^3)$ operations.
Since the artificially introduced inversions and intersections with $\mathcal O$ require at most $O(d^3)$ operations, the whole step requires at most $O(d^3)$ operations.
Note that the naive application of the pseudo-HNF algorithm of Cohen would have required $O(n^2)$ operations similar to Step~(ii) involving growing ideals. Let us summarize our algorithm.
\begin{algorithm}\label{alg:sum}
  Given an $\mathcal O$-module $M$ and a pseudomatrix $\mathcal P$ with $S(\mathcal P) = M$, the following steps return a pseudo-HNF of $M$.
  \begin{enumerate}
    \item
      Find an ideal $\mathfrak m$ such that $\mathfrak m\mathcal O^m \subseteq M$ (see Section~\ref{sec:modulus}).
    \item
      Compute $C \in \mathcal O^{m\times m}$ such that $\pi_\mathfrak m(C)$ is a strong echelon form of $\pi_\mathfrak m(M)$ using Algorithm~\ref{alg:strongechelonform} and Algorithm~\ref{alg:zsplit}
    \item
      Return the result of Algorithm~\ref{alg:demod} applied to $C$.
  \end{enumerate}
\end{algorithm}

Let $\mathcal P = ((\mathfrak a_i),A)$ be a pseudomatrix with $A \in K^{n\times m}$ and span $M \subseteq \mathcal O^m$.
Note that in order for the modular algorithm to be applicable, it is crucial that there exists some integral ideal $\mathfrak m$ such that $\mathfrak m \mathcal O^m \subseteq M \subseteq \mathcal O^m$, which is equivalent to $A$ being of rank $m$.
As in the case $\mathcal O = \Z$ without this assumption this modular technique won't work.
\par
Now assume that $\mathcal H = ((\mathfrak b_i)_i,H)$ is a pseudo-HNF of $\mathcal P$.
A transformation matrix from $\mathcal P$ to $\mathcal H$ is a matrix $U \in \operatorname{GL}_n(K)$ with $u_{ij} \in \mathfrak b_i \mathfrak a_j^{-1}$, $1 \leq i, j \leq n$, and $UA = H$.
We note that our algorithm for computing a pseudo-HNF does not produce such a transformation.
This is unsurprising, as the same problem can also be observed in case of modular $\Z$-HNF algorithms, see for example~\cite{Hafner1991}.
In our algorithm, the problems already show up during the calculation over the quotient ring, since our strong echelon form algorithm does not compute a transformation matrix either.
If needed, we can recover a transformation matrix $U$ from $\mathcal P$ and $\mathcal H$ by solving linear systems of equations over $K$ and by computing the kernel of $A$.
The problem of computing a transformation matrix efficiently during the modular algorithm is open.

It is worthwhile to mention the special case $\mathcal O = \Z$, for which we can recover the classical HNF over $\Z$.
Let $M \subseteq \Z^m$ be a $\Z$-module of rank $m$ with basis matrix $A \in \Z^{m \times m}$.
Moreover let $d \in \Z_{>0}$ be an element with $d \Z^{m} \subseteq M$ and $C \in \Z^{m \times m}$ such that $C$ modulo $d\Z$ is a strong echelon form of $\pi_d(M) \subseteq (\Z/d\Z)^m$.
Note that by multiplying the rows of $C \bmod d\Z$ with suitable elements of $(\Z/d\Z)^\times$ and by adding suitable elements, we can achieve that the diagonal elements of $C$ actually divide $d$.
Thus the whole demodularization step is superfluous and $C$ is the HNF of $M$.
This is in total contrast to the classical modular HNF algorithms, where after a computation in $\Z/d\Z$ one has to compute again a non-modular HNF of a matrix similar to (5.2) (see \cite[Section 2.1]{Hafner1991}).

\subsection{Finding a modulus $\mathfrak m$}\label{sec:modulus}

The crucial step in our normal form algorithm is the existence of an integral ideal $\mathfrak m$ with $\mathfrak m\mathcal O^m \subseteq M$. 
While there are situations in which such an $\mathfrak m$ is readily available, for example when working with ideals in relative extensions of number fields, let us briefly sketch how to obtain such an $\mathfrak m$ in general.
\par
First assume that $\mathcal P = ((\mathfrak a_i),A)$ is a pseudomatrix with $A \in K^{m \times m}$ and span equal to $M$. 
Then it is well known that the ideal $\mathfrak d = \det(A)\cdot \mathfrak a_1 \dotsm \mathfrak a_m \subseteq \mathcal O$ has the property that $\mathfrak d \mathcal O^m \subseteq M$.
Therefore it remains to show how to compute $\det(A)$ efficiently.
By clearing denominators we may assume that $A$ has only integral coefficients.
Now a small primes modular algorithm can be used: Find enough rational primes $p_i$ such that $\det(A)$ can be recovered from the determinant of $A$ modulo $(\prod_{i} p_i)$.
For each prime number compute the determinant of $A$ modulo $(p_i)$ using unimodular triangulation (Step~(i) of Algorithm~\ref{alg:strongechelonform}).
Now use the Chinese remainder theorem to obtain $\det(A)$ modulo $(\prod_i p_i)$ and therefore $\det(A)$.
We refer the reader to \cite{Biasse2014} for details on the required size of $(\prod_i p_i)$.
\par
Now consider the general case with $A \in K^{n\times m}$, $n \geq m$.
In \cite[Definition 1.4.9]{Cohen2000} the notion of \textit{minor ideals} of pseudomatrices is introduced, which is a natural extension of minors to pseudomatrices (instead of extracting only rows and columns one also has to take care of the coefficient ideals). Moreover it is shown that the \textit{determinantal ideal} $\mathfrak d \subseteq \mathcal O$ of $\mathcal P$, which is defined to be the sum of all $m \times m$ minor ideals of $\mathcal P$, satisfies $\mathfrak d \mathcal O^m \subseteq M$.
Note that since in general there are just too many minor ideals (as in the case of minors of matrices), in order to find an ideal $\mathfrak m$ with $\mathfrak m \mathcal O^m \subseteq M$ it is sufficient to compute only \textit{one} non-zero minor ideal (which exists since $M$ has rank $m$).

\section{Splitting the modulus}\label{sec:splitting}

In order to speed up computations, we would like, if possible to split
the modulus, the idea being that if $\mathfrak m = \mathfrak a\mathfrak b$, then, by the Chinese remainder theorem, 
$(\mathcal O/\mathfrak m) = (\mathcal O/\mathfrak a) \times (\mathcal O/\mathfrak b)$
and thus ``everything'' modulo $\mathfrak m$ can be done more efficiently
by computing in $(\mathcal O/\mathfrak a)$ and $(\mathcal O/\mathfrak b)$.
If we allow for a complete factorization, we of course achieve
$(\mathcal O/\mathfrak m) = \prod_\mathfrak p (\mathcal O/\mathfrak p^{v_\mathfrak p(\mathfrak m)})$,
however, for general $\mathfrak m$, a factorization is prohibitively expensive.
We observe that the complete factorization would result in the best complexity!

Furthermore, for any prime $\mathfrak p$ of degree one we have
\[ (\mathcal O/\mathfrak p^k) \cong \mathbf Z/p^k\mathbf Z \]
for $p$ the rational prime with $\mathfrak p \cap \mathbf Z = (p)$.
Again, the Chinese remainder
theorem, this time for $\mathbf Z$, allows us to combine any
degree one prime ideals with distinct underlying rational primes
into one, thus obtaining:
\[ (\mathcal O/\mathfrak m) \cong (\mathbf Z/m\mathbf Z) \times (\mathcal O/\mathfrak m') \]
with some potentially much smaller ideal $\mathfrak m'$. 
Once such a decomposition is obtained, much faster algorithms for $\mathbf Z/m\mathbf Z$ can be applied for hopefully a large part of the ring.

Unfortunately, without the use of factorization such a complete splitting
is difficult to achieve.
We propose the following simple algorithm which
is aimed at computing a large portion of the ``degree one part'' while still being
fast.

\begin{algorithm}[($\mathbf Z$-split)]\label{alg:zsplit}
Let $\mathfrak m$ be an integral ideal. The following steps will
produce coprime integral ideals $\mathfrak a$, $\mathfrak b$ with $\mathfrak a\mathfrak b=\mathfrak m$ and a rational integer $m \in \Z$ such that
$(\mathcal O/\mathfrak a) \cong \mathbf Z/m\mathbf Z$
\begin{enumerate}
  \item Let $m = \min( \mathbf Z_{\geq 1} \cap \mathfrak m)$ and $b = \inorm(\mathfrak m)/m$.
\item repeat
\item \quad compute $g = \gcd(m, b)$, $m = m/g$ and $b = b^2 \bmod m$,
\item until $g=1$.
\item Compute $\mathfrak a = m\mathcal O + \mathfrak m$ and $\mathfrak b = (\inorm(\mathfrak m)/m) \mathcal O + \mathfrak m$.
\item return $\mathfrak a,\mathfrak b$.
\end{enumerate}
\end{algorithm}

Note that this algorithm will not necessarily find a maximal
ideal $\mathfrak a \mid \mathfrak m$ such that $(\mathcal O/\mathfrak a) \cong \mathbf Z/m\mathbf Z$ and $\mathfrak a$, $\mathfrak m\mathfrak a^{-1}$ are coprime:
Let $\mathfrak m = \mathfrak p_1\mathfrak p_2 \mathfrak q_1\mathfrak q_2$
where $\mathfrak p_i$, $\mathfrak q_i$ are primes of degree one lying above
distinct rational primes $p$ and $q$ respectively. Then
$\min(\mathfrak m) = pq$ and $\inorm(\mathfrak m) = p^2q^2$, so the algorithm
will terminate with $\mathfrak a = \mathcal O$. 
However, $\mathfrak a = \mathfrak p_1\mathfrak q_1$ would be a correct result---but we need to actually factorize $\mathfrak m$ to find this decomposition.

\begin{proof}[of correctness]
  For any integral ideal $\mathfrak a$ the minimum $\min(\mathfrak a) = \min(\mathbb \Z_{\geq 1} \cap \mathfrak a)$ is equal to $\exp(\mathcal O/\mathfrak a)$ (the exponent of the abelian group $(\mathcal O/\mathfrak a)$):
  Clearly, $\min(\mathfrak a)\in \mathfrak a$ and $\ord (1) = \min\mathfrak a$
  where $\ord$ is the order of the element. Thus if $\inorm(\mathfrak a) = \lvert \mathcal O/\mathfrak a \rvert = \min(\mathfrak a)$, then $(\mathcal O/\mathfrak a) \cong \mathbf Z/\!\min(\mathfrak a)\mathbf Z$, generated by $1$.

From the decomposition above we see that if $\inorm(\mathfrak a) \ne \min( \mathfrak
a)$, then we either have a prime $\mathfrak q$ dividing $\mathfrak a$ of degree
greater then one, we have at least two distinct prime ideals $\mathfrak q_i \mid \mathfrak a$
($i=1,2$) lying above the same rational prime or we have for some ramified 
prime $\mathfrak q$ with $\mathfrak q^2 \mid \mathfrak a$: In the first case $(\mathcal
O/\mathfrak q, +)$ is a non-cyclic group, in the second case
we have a product of $2$ cyclic groups with non-coprime orders
while in the last case clearly $\min(\mathfrak q) = \min(\mathfrak q^2)$, but
$\inorm(\mathfrak q) \ne \inorm(\mathfrak q^2)$.
In all other cases $\mathfrak a$ is composed of powers of degree one 
prime ideals over distinct rational primes as well as ramified primes with exponent 1.

In the algorithm $b$ initially contains all rational primes $q$ such that
either $\mathfrak q \mid q$ for some prime of degree greater then one, $\mathfrak q_i \mid q$ with $i=1,2$
or $\mathfrak q^2 \mid \mathfrak a$ for some ramified prime $\mathfrak q \mid q$.
During the loop, we remove all those rational primes from $m$ and in the final step we then split $\mathfrak m$ accordingly. The squaring of $b$ ensures that the total time is polynomially bounded.
\end{proof}

Let $\mathfrak m = \mathfrak a \mathfrak b$ be the splitting obtained by this algorithm.
Experimentally, we have $\inorm(\mathfrak b) \ll \inorm(\mathfrak a)$, in fact frequently, $\inorm(\mathfrak b)=1$,
thus the effort to compute a pseudo-HNF over a number field is mostly independent of
it's degree and depends almost \textit{only} on the dimension of the matrix.

We note that the CRT techniques for the HNF are also used to derive (expected)
polynomial complexity in the presence of lots of small prime ideals: The
runtime depends on $p_\mathfrak m$ and $p_\mathfrak m$ is mainly determined by the norms
of the small prime ideals. Thus we use the approach of Belabas (see~\cite[Section 6]{Belabas2004}) to split the 
modulus into small primes, where we can directly use his (deterministic)
linear algebra approach to find uniformizing elements and thus work in the 
completions. For the (large) remainder term, we use our randomized methods.

\section{Computations}

We have implemented both the Euclidean structure and the improved pseudo-HNF computation in the computer algebra system \textsc{Magma}~\cite{Bosma1997}.
To illustrate the efficiency of our techniques, we computed 
pseudo-HNFs for random matrices over a range of
fields. In particular, we used $K = \mathbf Q[t]/(t^d-10)$
for $d=2$, $4$, $8$, and generated matrices of
dimensions $n$ up to $300$, depending on $d$. More specifically,
starting at $k=1$, we computed for two random matrices $A$ of dimension $n = 10\cdot k$ a pseudo-HNF of the pseudomatrix $((\mathcal O)_{1 \leq i \leq n},A)$ both using 
our method and \textsc{Magma}'s implementation of Cohen's algorithm (available through the command \textsf{HermiteForm}) until a 
single computation took more than one hour.
By random matrices we mean matrices over $\mathcal O$, where the coefficients (with respect to a fixed integral basis) of the matrix entries are chosen uniformly in $\{ -2^B, \dotsc, 2^B \}$ for the times $t_1$, $t_2$ and rounded normally distributed with mean $0$ and variance $2^{2B}$ for the times $g_1$, $g_2$.
Table~\ref{tab:comp} shows the results for different choices of parameters $d$, $n$ and $B$, where $t_1$ (resp. $g_1$) denotes the running time (in seconds) using Algorithm~\ref{alg:sum} and $t_2$ (resp. $g_2$) the running time (in seconds) using \textsc{Magma}'s implementation of Cohen's algorithm.
We briefly note, that the longer running times for the normal distributed
matrix entries are a consequence of them being larger: By Hadamard's inequality,
the size of the determinant depends mainly on the largest entry in each row or column respectively. Using normal distributed entries, this maximum value will usually be larger
than $2^B$, which is reflected in the runtime.

\begin{table}[ht]\vspace*{-3ex}
\caption{Algorithm~\ref{alg:sum} versus \textsc{Magma}'s \textsf{HermiteForm}}
\label{tab:comp}
 \begin{tabular}{cccrrrrrr}
   \hline
    $d$ & $B$ & $n$ & $t_1$  & $t_2$ & $t_2/t_1$ & $g_1$ & $g_2$ & $g_2/g_1$ \\
    \hline
    2 & 10 & 10   & 0.095   & 0.020    & 0.210 & 0.030   & 0.010    & 0.333 \\
      &    & 20   & 0.130   & 0.065    & 0.500 & 0.335   & 0.080    & 0.238 \\
      &    & 30   & 0.375   & 0.210    & 0.560 & 0.465   & 0.155    & 0.333 \\
      &    & 40   & 0.325   & 0.300    & 0.923 & 0.405   & 0.360    & 0.888 \\
      &    & 200  & 107.715 & 143.975  & 1.336 & 128.335 & 165.475  & 1.289 \\ 
      &    & 300  & 580.370 & 1031.430 & 1.777 & 842.675 & 1210.775 & 1.436 \\
      \hline
    2 & 100 & 10  & 0.075   & 0.155    & 2.066 & 0.055   & 0.090    & 1.636 \\
      &     & 20  & 0.380   & 0.655    & 1.723 & 0.400   & 0.740    & 1.850 \\
      &     & 30  & 1.245   & 2.490    & 2.000 & 1.455   & 2.890    & 1.986 \\
      &     & 40  & 3.265   & 6.985    & 2.139 & 3.155   & 10.630   & 3.369 \\
      &     & 80  & 47.945  & 107.115  & 2.234 & 51.495  & 107.320  & 2.084 \\
      &     & 140 & 549.080 & 1194.445 & 2.175 & 540.660 & 1008.665 & 1.865 \\
      \hline
    4 & 10  & 10  & 0.080   & 0.055    & 0.687 & 0.055   & 0.085    & 1.545 \\
      &     & 20  & 0.260   & 0.390    & 1.500 & 0.195   & 0.385    & 1.974 \\
      &     & 30  & 0.525   & 1.040    & 1.980 & 0.640   & 1.325    & 2.070 \\
      &     & 40  & 1.955   & 3.080    & 1.575 & 0.945   & 3.440    & 3.640 \\
      &     & 80  & 10.080  & 37.970   & 3.515 & 12.165  & 48.505   & 3.987 \\
      &     & 140 & 77.640  & 346.315  & 4.460 & 107.005 & 402.735  & 3.763 \\
      \hline
    8 & 10  & 10  & 0.290   & 0.850    & 2.931 & 0.160   & 0.660    & 4.125 \\
      &     & 20  & 0.620   & 5.345    & 8.620 & 1.445   & 6.955    & 4.813 \\
      &     & 30  & 1.605   & 26.470   & 16.492 & 1.785   & 33.190   & 18.593 \\
      &     & 40  & 5.675   & 57.535   & 10.138 & 7.355   & 96.797   & 13.160 \\
      &     & 80  & 48.445  & 746.120  & 15.401 & 44.720  & 917.765  & 20.522 
  \end{tabular}
\end{table}

\section{Conclusions}

In the preceding sections, we presented a suite of new algorithms to
explicitly utilize the Euclidean structure of quotients 
of rings of integers. The power of those ideas was demonstrated via
a new, probabilistic, modular algorithm to compute normal forms of modules over
rings of integers. The resulting algorithm is both faster and conceptually
simpler as it does not need to work with the pseudobases and the coefficient
ideals.

Our new lifting algorithm to obtain a non-modular Hermite form is even
in the case of $\Z$-modules new and conceptually simpler than 
the usual lifting algorithm: We do not need to perform any elimination
steps in characteristic $0$, all is done through the adapted Howell normal form on the
modular ``side''.

While we did not do a complete bit complexity analysis for the pseudo Hermite normal form algorithm (Algorithm~\ref{alg:sum}),
it is clear that the method presented has polynomial expected complexity:
The modular algorithms all use an expected polynomial number of operations on 
elements of a bounded size and the lifting steps are easily realized
using linear algebra over $\Z$. The comparison with 
Cohen's algorithm on theoretical grounds is difficult as his algorithm is
not analyzed and conjectured to have exponential runtime due to
intermediate coefficient swell. A suitably modified modular
version was proven to be polynomial time in \cite{Biasse2012}, but
while the complexity in the module dimension is the same, the complexity
in the field degree is far worse there due to the expensive ideal operations
in particular the lattice basis reduction to keep the ideals bounded in size.

Future work will try to find faster and deterministic algorithms.

\bibliographystyle{plain}
\bibliography{paper_quotient}

\affiliationone{
   Claus Fieker and Tommy Hofmann\\
   Fachbereich Mathematik \\
   Technische Universit\"{a}t Kaiserslautern \\
   Postfach 3049 \\
   67653 Kaiserslautern \\
   Germany
   \email{fieker@mathematik.uni-kl.de\\
   thofmann@mathematik.uni-kl.de}}

\end{document}